\documentclass[11pt, reqno]{amsart} 

\numberwithin{equation}{section}

\usepackage{amsfonts}
\usepackage{amsmath}
\usepackage{easy}
\usepackage{easybmat}
\usepackage{amssymb}
\usepackage{graphicx}
\usepackage{subfig}
\usepackage{url}
\usepackage{verbatim}
\usepackage{enumitem}
\usepackage[margin=1in]{geometry}


\theoremstyle{plain}
\newtheorem{theorem}{Theorem}
\newtheorem{proposition}[theorem]{Proposition}
\newtheorem{lemma}[theorem]{Lemma}
\newtheorem{corollary}[theorem]{Corollary}

\newtheorem{question}[theorem]{Question}

\theoremstyle{definition}
\newtheorem{definition}[theorem]{Definition}

\theoremstyle{remark}
\newtheorem{remark}[theorem]{Remark}

\DeclareMathOperator{\rank}{rank}
\DeclareMathOperator{\nullity}{nullity}
\DeclareMathOperator{\im}{im}

\title[Universally Rigid Framework Attachments]{Universally Rigid Framework Attachments}
\thanks{I would like to thank Dylan Thurston for his tremendous help and support, and Andrew Fanoe for fruitful discussions. This work was partially supported by NSF Grant DMS-0739392}
\author{Kiril Ratmanski}
\address{Department of Mathematics, Columbia University, New York, NY 10027}
\email{kr2330@columbia.edu}
\begin{document}


\maketitle


\begin{abstract}
A \emph{framework} is a graph and a map from its vertices to $\mathbb{R}^d$. A framework is called \emph{universally rigid} if there is no other framework with the same graph and edge lengths in $\mathbb{R}^{d'}$ for \emph{any} ${d'}$. A \emph{framework attachment} is a framework constructed by joining two frameworks on a subset of vertices. We consider an attachment of two universally rigid frameworks that are in general position in $\mathbb{R}^d$. We show that the number of vertices in the overlap between the two frameworks must be sufficiently large in order for the attachment to remain universally rigid.
Furthermore, it is shown that universal rigidity of such frameworks is preserved even after removing certain edges. Given positive semidefinite stress matrices for each of the two initial frameworks, we analytically derive the PSD stress matrices for the combined and edge-reduced frameworks. One of the benefits of the results is that they provide a general method for generating new universally rigid frameworks.
\end{abstract}


\begin{section}{Introduction}\label{sec:intro}

A \emph{framework} in $\mathbb{R}^d$ is a graph embedded in Euclidean $d$-dimensional space.
The graph vertices are assigned point coordinates, the edges are represented as line segments connecting corresponding points, and the edge lengths are determined as the distances between the edge-connected points in $\mathbb{R}^d$. One of the important questions in geometry of frameworks is whether there are other frameworks with the same edge lengths. When searching for such frameworks we want to exclude shape-preserving trivial euclidean transformations such as translations, rotations and reflections, applied to already found frameworks. Thus different frameworks will differ in pairwise distances between some pairs of vertices that are not connected by an edge.
In case there are no different frameworks in $\mathbb{R}^d$, the original framework is called \emph{globally rigid}. Furthermore, if there are no different frameworks in any space of higher dimension, the framework is called \emph{universally rigid}.

Connelly first studied universal rigidity in the context of Cauchy polygons \cite{cn}. Later, in \cite{cnch2}, he set and proved a sufficient condition for an arbitrary framework to be universally rigid. The condition is formulated in terms of \emph{stresses} acting on the edges, and it states that a framework in $\mathbb{R}^d$ is universally rigid if there exists a positive semidefinite stress matrix of nullity $d+1$ (see below for definitions). Recently, Gortler and Thurston \cite{gt} proved that this condition is also necessary for \emph{generic} frameworks, in which the vertices' coordinates are algebraically independent over rationals. Alfakih \cite{ak} then established sufficiency of the condition for the wider class of frameworks that are in \emph{general position}, such that no subset of $d+1$ vertices is affinely dependent. In \cite{gt}, Gortler and Thurston have also suggested an algorithm for searching for such PSD stress matrices based on semidefinite programming. (See \cite{pat}, \cite{LLR} for related work, and \cite{SBoyd} for a general overview of semidefinite programming.)


Although universally rigid frameworks are now characterized by the above conditions, not many results are known about operations on frameworks that preserve universal rigidity, nor are there many known examples of systematically generated universally rigid frameworks. In \cite{cn}, Connelly generated larger Cauchy polygons inductively from smaller ones. Another known framework extension method that preserves universal rigidity is $(d+1)$-lateration \cite{trilat}\cite{aklat}, which is also too specific and not efficient in terms of the number of edges added to the original framework.

In this paper we consider a new problem of characterizing the universal rigidity of attachments of two universally rigid frameworks in general position in $\mathbb{R}^d$. By attachment we mean a combined framework in which the two frameworks share some vertices and in which all edges are preserved (see Figure~\ref{fig:attach3D}). We prove that an attachment is universally rigid if and only if the number of shared vertices (at which the two frameworks attach) is $d+1$ or greater (see Theorem \ref{thm:OmegaMain}). We also show that removing those edges connecting shared vertices that are inherited from only one of the attached frameworks preserves universal rigidity of the attachment. Finally, we derive PSD stress matrices of nullity $d+1$ for both original and edge-reduced attachments explicitly, from the PSD stress matrices that are given for the two joined frameworks.

Thus, similar to frameworks, we can characterize the framework attachments by the stress matrices. Moreover, the stress matrices for universally rigid framework attachments have the same properties as the stress matrices for universally rigid frameworks, and computing such matrices does not involve computational search. These results provide a more general method than $(d+1)$-lateration \cite{aklat} for generating new universally rigid frameworks from two or more arbitrary universally rigid frameworks. It can also be used to analyze complex frameworks by decomposing them into smaller attached frameworks for which universal rigidity is known (or can be more easily established). The results may have a potential application to structural engineering and molecular biology.

The rest of the paper is organized as following. Definitions and preliminaries are given in section \ref{sec:bkgnd}. In section \ref{sec:Main} we formulate and prove the main result which is a necessary and sufficient condition for an attachment to be universally rigid. We also prove universal rigidity of the edge-reduced framework attachment. Section \ref{sec:EdgRedStressMat} contains derivation of PSD stress matrices for each of the two attachments. In Section \ref{sec:Apps} we show that edge-reduced framework attachment is a generalization of $(d+1)$-lateration as a method for creating new universally rigid frameworks possessing PSD stress matrices of nullity $d+1$. We also show that an edge-reduced graph attachment of two graphs whose frameworks are universally rigid in any general position always has a a PSD matrix of nullity $d+1$.
\begin{figure}
\includegraphics[scale=0.8]{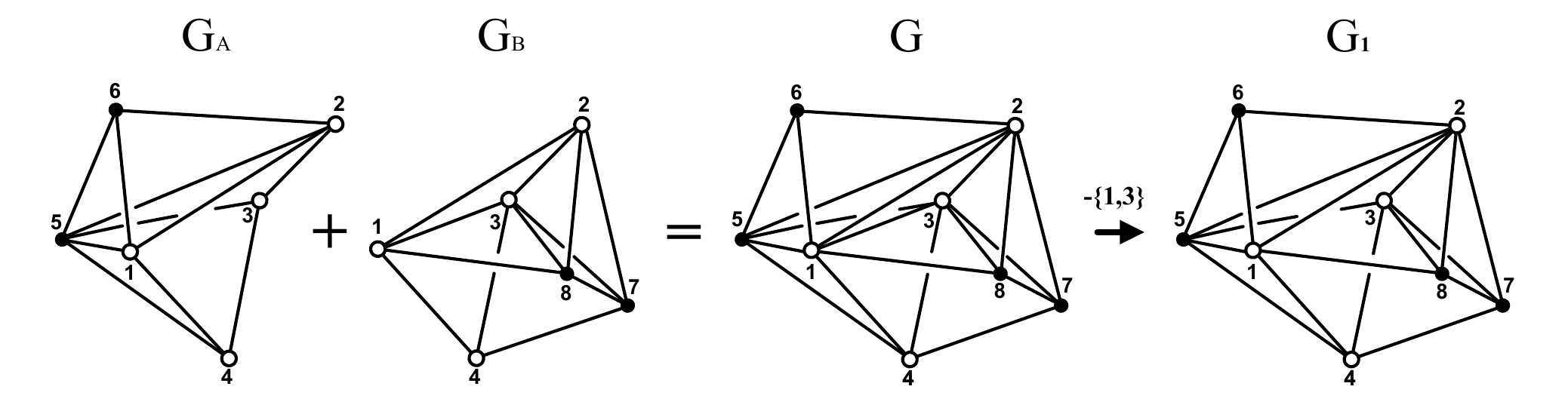}
\caption{Attachment of two universally rigid frameworks in $\mathbb{R}^3$. Vertices $1,2,3$ and $4$ are shared by the two frameworks. Compared to $G_{A}$, graph $G_{B}$ has an additional edge $\{1,3\}$ between the shared vertices. By Theorem \ref{thm:EdgeReducedMain}, removing this edge from the attachment $G$ preserves its universal rigidity.}
\label{fig:attach3D}
\end{figure}

\end{section}


\begin{section}{Background}\label{sec:bkgnd}
A \emph{graph} $G=(V,E)$ is a finite set of vertices $V=(1,2,\ldots,v)$ together with a set of edges $E$. The set $E$ can be represented as a set of two-element subsets $\{i,j\}$ of $V$. We denote the number of graph vertices and edges by $v=|V|$ and $e=|E|$, respectively.
A graph is called \emph{k-connected} if removing at least \emph{k} vertices disconnects the remaining vertices.
Having two graphs $G_{A}=(V_{A},E_{A})$ and $G_{B}=(V_{B},E_{B})$ such that $V_{A}\cap V_{B}\neq\emptyset$ after appropriate labeling of vertices by integers, we can construct a \emph{graph attachment} $G=(V,E)$ where $V=V_{A}\cup V_{B}$ and $E=E_{A}\cup E_{B}$ (Figure~\ref{fig:attach3D}). Throughout the paper, we assume that the set $V_{A}\cap V_{B}$ of shared vertices is a proper subset of both $V_{A}$ and $V_{B}$.

A \emph{configuration} of graph $G=(V,E)$ in $\mathbb{R}^d$ is a mapping of $V$ to $\mathbb{R}^d$. A configuration can also be represented as a single point $\mathbf{p}=(\mathbf{p}_1, \ldots ,\mathbf{p}_v) \in \mathbb{R}^d \times \cdots \times \mathbb{R}^d = \mathbb{R}^{vd}$, where $\mathbf{p}_i$ is the coordinate of vertex $i$ in $\mathbb{R}^d$. A configuration is called \emph{generic} if its coordinates (in $\mathbb{R}^{vd}$) are algebraically independent over $\mathbb{Q}$. A configuration is said to be in \emph{general position} if every subset $\{\mathbf{p}_{i_{1}}, \ldots ,\mathbf{p}_{i_{d+1}}\}$ of $d+1$ vertices is affinely independent. Two configurations $\mathbf{p}$ and $\mathbf{q}$ are \emph{congruent}, or $\mathbf{p} \cong \mathbf{q}$, if $\mathbf{q}=R\mathbf{p}$, where $R$ is an element of the group Euc($d$) of rigid motions in $\mathbb{R}^d$ including translations and rotations. A graph $G$ together with its configuration $\mathbf{p}$ is called \emph{framework} in $\mathbb{R}^d$, denoted as $G(\mathbf{p})$. Two frameworks $G(\mathbf{p})$ and $G(\mathbf{q})$ are said to be \emph{equivalent} if whenever $\{i,j\}$ is an edge of $G$, then $\|\mathbf{p}_{i}-\mathbf{p}_{j}\|=\|\mathbf{q}_{i}-\mathbf{q}_{j}\|$. A \emph{framework attachment} is a framework of graph attachment defined above. Alternatively, if we are given two frameworks both having subsets of vertices with the same pairwise distances, it is possible to coincide the two subsets of vertices in space by applying a rigid motion to one of the frameworks. A framework attachment is created by merging some or all pairs of the coinciding vertices, such that all edges from both frameworks are preserved. There may be more than one attachment constructed from the two frameworks. While the remainder of this section covers single graphs and related notions, we will treat framework attachments in detail in the next sections.

A framework $G(\mathbf{p})$ in $\mathbb{R}^d$ is called \emph{globally rigid} if whenever $G(\mathbf{q})$ is equivalent to $G(\mathbf{p})$ then $\mathbf{p} \cong \mathbf{q}$. A weaker than global rigidity is the notion of local rigidity. A framework $G(\mathbf{p})$ in $\mathbb{R}^d$ is called \emph{locally rigid} if there is $\epsilon>0$ such that whenever $G(\mathbf{q})$ is equivalent to $G(\mathbf{p})$ \emph{and} $\mathbf{q}\in B_{\epsilon}(\mathbf{p})$, then $\mathbf{p} \cong \mathbf{q}$. In other words, locally rigid framework is globally rigid in some $\epsilon$-neighborhood. By definition, global rigidity implies local rigidity.

A framework $G(\mathbf{p})$ in $\mathbb{R}^d$ is called \emph{universally rigid} if whenever $G(\mathbf{q})$ in $\mathbb{R}^{d'}$ is equivalent to $G(\mathbf{p})$ and $d'\geq d$, then $\mathbf{p} \cong \mathbf{q}$. A graph is called \emph{universally rigid} in $\mathbb{R}^d$ if any of its frameworks in general position $\mathbb{R}^d$ is universally rigid. Universal rigidity implies global rigidity.

A necessary condition for global rigidity of a framework is given by the following variation of the Hendrickson's theorem \cite{hn}, which holds for frameworks in general position rather than generic frameworks.
\begin{theorem} \label{thm:globconn}
A globally rigid framework in general position in $\mathbb{R}^{d}$ is $(d+1)$-connected.
\end{theorem}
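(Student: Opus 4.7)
The plan is to adapt Hendrickson's classical reflection argument \cite{hn}, originally formulated for generic frameworks, to the weaker hypothesis of general position. I argue by contradiction: assume that $G(\mathbf{p})$ is globally rigid in general position in $\mathbb{R}^d$ but that $G$ admits a vertex cut $S \subseteq V$ with $|S| \le d$, so that $V \setminus S$ partitions into nonempty classes $V_1, V_2$ with no edges of $G$ between them. The goal is to exhibit an equivalent framework $G(\mathbf{q})$ with $\mathbf{q} \not\cong \mathbf{p}$, contradicting global rigidity.

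The first step is geometric: since $|S| \le d$, general position makes $\mathbf{p}(S)$ affinely independent, so it spans an affine subspace of dimension at most $d-1$ and lies inside some hyperplane $H \subset \mathbb{R}^d$. Let $\sigma$ denote orthogonal reflection across $H$. The second step is to define $\mathbf{q}$ by reflecting only one side of the cut: set $\mathbf{q}_i = \sigma(\mathbf{p}_i)$ for $i \in V_1$ and $\mathbf{q}_i = \mathbf{p}_i$ otherwise. Because no edge connects $V_1$ with $V_2$ and $\sigma$ fixes $\mathbf{p}(S)$ pointwise, every edge length is preserved, so $G(\mathbf{q})$ is equivalent to $G(\mathbf{p})$.

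The crucial third step is to show $\mathbf{q} \not\cong \mathbf{p}$. Choose the labeling so that $|V_2 \cup S| \ge d+1$; by general position, $V_2 \cup S$ then contains $d+1$ affinely independent vertices, so any rigid motion $T$ with $T\mathbf{q} = \mathbf{p}$ must fix them pointwise and therefore equal the identity. This forces $\sigma(\mathbf{p}_i) = \mathbf{p}_i$ for every $i \in V_1$, placing $V_1 \cup S$ inside the hyperplane $H$; since $|V_1 \cup S| \ge d+1$ as well, this contradicts general position.

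I expect the main obstacle to be the low-vertex regime in which both $|V_1 \cup S|$ and $|V_2 \cup S|$ fall below $d+1$, where the pinning argument above breaks down. In that case one must replace the reflection $\sigma$ by a generic element of the nontrivial stabilizer of $\mathbf{p}(S)$ in the group of rigid motions (available precisely because $|S| < d$ there), and then invoke a genericity argument to guarantee that the rotated $V_1$-side cannot be moved back to its original position by any isometry of $\mathbb{R}^d$ fixing $V_2 \cup S$.
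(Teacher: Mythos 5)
Your overall strategy (Hendrickson's partial-reflection argument transported from generic to general position) is exactly what the paper does — its proof is just a citation to \cite{hn} with the remark that the argument carries over — but your execution has a genuine gap in the noncongruence step. Your pinning argument needs \emph{both} $|V_2\cup S|\ge d+1$ (to force $T=\mathrm{id}$) \emph{and} $|V_1\cup S|\ge d+1$ (to get the contradiction from $V_1\cup S\subset H$), yet your labeling only arranges the first; the case $|V_1\cup S|\le d\le |V_2\cup S|-1$ (e.g.\ $d=3$, $|S|=2$, $V_1$ a single vertex) is covered by neither your main branch nor your fallback, which you reserve for the regime where both sides are small. The underlying omission is that you never constrain $H$ beyond containing $\mathbf{p}(S)$: if some vertices of $V_1$ happen to lie on $H$, the reflection may fix them and your contradiction evaporates. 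The repair is to use general position more aggressively: since $|S|\le d$, no vertex outside $S$ lies in $\mathrm{aff}(\mathbf{p}(S))$ (that would give $\le d+1$ affinely dependent points), so one can choose $H\supseteq\mathrm{aff}(\mathbf{p}(S))$ avoiding \emph{all} vertices of $V_1\cup V_2$ (forced automatically when $|S|=d$, by a generic extension when $|S|<d$). With that choice, noncongruence is immediate and uniform in all cases: pick $u\in V_1$, $w\in V_2$; since $\sigma(\mathbf{p}_u)\neq\mathbf{p}_u$ and $\mathbf{p}_w\notin H$, the point $\mathbf{p}_w$ is not on the perpendicular bisector hyperplane of the segment from $\mathbf{p}_u$ to $\sigma(\mathbf{p}_u)$ (which is $H$ itself), so the $u$--$w$ distance changes, and congruent configurations must agree on all pairwise distances. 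This eliminates both the case split and your rotation fallback.

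A further caution about that fallback: you say you would ``invoke a genericity argument,'' but the hypothesis here is only general position, not genericity of the configuration, so any genericity can refer only to your \emph{choice} of motion (rotation or hyperplane), not to the points $\mathbf{p}$; as written this step is a sketch, and the distance-change argument above is the clean way to avoid needing it at all. (There is also the small-vertex corner the paper itself glosses over — e.g.\ a complete graph on fewer than $d+2$ vertices — where the statement should be read with the paper's nonstandard definition of $k$-connectivity or with a vertex-count hypothesis, as in Hendrickson's original theorem.)
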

\begin{proof}
See \cite{hn}, where the same argument for generic frameworks applies to frameworks in general position as well.
\end{proof}

For a given graph $G=(V,E)$ with $v$ vertices, $e$ edges and dimension $d$, we define the \emph{edge function} $f: \mathbb{R}^{vd} \to \mathbb{R}^{e}$ as a function mapping graph configuration in $\mathbb{R}^{vd}$ to its edge-square-length in $G(\mathbf{p})$ (after some ordering of $e$ edges):
$f(\mathbf{p})=f(\mathbf{p}_1, \ldots ,\mathbf{p}_v)=(\ldots ,\frac{1}{2}\|\mathbf{p}_i-\mathbf{p}_j\|^2,\ldots)$. The $e \times vd$ Jacobian matrix corresponding to the edge function is called \emph{rigidity matrix}, denoted by $df$:

\begin{equation}\label{eqn:dfdef}
df=
\left( \begin{array}{ccccccccccc}
\cdot & \cdots & \cdot & \cdots  & \cdot & \cdots & \cdot & \cdots  & \cdot & \cdots & \cdot \\
0     & \cdots & 0     & \mathbf{p}_i-\mathbf{p}_j & 0     & \cdots & 0     & \mathbf{p}_j-\mathbf{p}_i & 0     & \cdots & 0     \\
\cdot & \cdots & \cdot & \cdots  & \cdot & \cdots & \cdot & \cdots  & \cdot & \cdots & \cdot \\
\end{array} \right).
\end{equation}
Given the rigidity matrix $df$, a framework is \emph{infinitesimally rigid} if $\rank (df)=vd-\binom{d+1}{2}$.
Equivalently, infinitesimal rigidity of a framework can be characterized by its infinitesimal flexes. A vector $\mathbf{q}\in\mathbb{R}^{vd}$ is called an \emph{infinitesimal flex} of the framework $G(\mathbf{p})$ if for any edge $\{i,j\}$, the scalar product $(\mathbf{p}_i-\mathbf{p}_j,\mathbf{q}_i-\mathbf{q}_j)=0$. A vector $\mathbf{q}$ is called a \emph{trivial infinitesimal flex} of the framework $G(\mathbf{p})$ if there exists a differentiable path $\mathbf{h}(t)$ of rigid motions in $\mathbb{R}^{d}$ such that $\mathbf{h}(0)=\mathbf{p}$ and $\mathbf{h}'(0)=\mathbf{q}$. Along the path all pairwise distances (and edge lengths in particular) remain constant, therefore evaluating the derivative of $(\mathbf{h}_{i}(t)-\mathbf{h}_{j}(t))^{2}$ at $t=0$ shows that such $\mathbf{q}$ satisfies the scalar product equation above. A framework $G(\mathbf{p})$ is then called infinitesimally rigid if every infinitesimal flex of $G(\mathbf{p})$ is trivial.

The next notion is key for the main results of the paper. For each edge $\{i,j\}$ of graph $G$ we define a scalar $w_{ij}=w_{ji}$, and have them arranged into a vector $w=(\ldots ,w_{ij},\ldots) \in \mathbb{R}^{e}$. We say that vector $w$ is an \emph{equilibrium stress} (or simply \emph{stress} in this paper), if the following vector equation holds for each vertex $i$:

\begin{equation}\label{eqn:stresseq}
\sum_{j:\{i,j\}\in E}w_{ij}(\mathbf{p}_i-\mathbf{p}_j)=0.
\end{equation}
By writing the above as a system of $vd$ scalar equations and then forming into a matrix form, we arrive at the following relation between stresses and the rigidity matrix:

\begin{proposition}\label{prop:stressdf}
The space of stresses equals to $\ker(df^T)$.
\end{proposition}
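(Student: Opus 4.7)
The plan is to unpack the definition of $\ker(df^T)$ block-by-block and recognize that the resulting $vd$ scalar equations reassemble into exactly the $v$ vector equations (\ref{eqn:stresseq}) defining equilibrium stresses, giving the equality by direct inspection.

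First I would partition the coordinates of any vector in $\mathbb{R}^{vd}$ into $v$ consecutive blocks of length $d$, one per vertex, so that the $i$-th block of $df^T w$ is a sum contributed by the rows of $df$ indexed by edges incident to vertex $i$. From the explicit form of $df$ in (\ref{eqn:dfdef}), the row indexed by the edge $\{i,j\}$ contributes $w_{ij}(\mathbf{p}_i - \mathbf{p}_j)$ to the $i$-th block (via the $\mathbf{p}_i-\mathbf{p}_j$ entry) and $w_{ij}(\mathbf{p}_j-\mathbf{p}_i)$ to the $j$-th block, with zero contribution to every other block. Using symmetry $w_{ij}=w_{ji}$, the $i$-th block of $df^T w$ is therefore
\begin{equation*}
(df^T w)_i \;=\; \sum_{j:\{i,j\}\in E} w_{ij}(\mathbf{p}_i - \mathbf{p}_j).
\end{equation*}

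Setting $(df^T w)_i = 0$ for every vertex $i$ is precisely the system of vector equations (\ref{eqn:stresseq}) characterizing equilibrium stresses, so $w\in\ker(df^T)$ if and only if $w$ is a stress. The only step that requires care is the bookkeeping of which rows of $df$ contribute to which block of $df^T w$, but this is immediate from the sparsity pattern shown in (\ref{eqn:dfdef}). No further tools are needed, and there is no substantive obstacle — the proposition is essentially a repackaging of the definitions.
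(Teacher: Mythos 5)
Your proof is correct and matches the paper's approach: the paper likewise obtains the proposition by writing the equilibrium equations (\ref{eqn:stresseq}) as a system of $vd$ scalar equations and recognizing them, via the explicit form of $df$ in (\ref{eqn:dfdef}), as the statement $df^T w = 0$. Your block-by-block bookkeeping is exactly the unpacking the paper leaves implicit.
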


For each element $w=(\ldots ,w_{ij},\ldots)$ of the space of stresses, there corresponds a $v\times v$ symmetric \emph{stress matrix} $\Omega$ defined such that for $i,j \in V$ and $i\neq j$, $\Omega_{ij}=w_{ij}$ for $\{i,j\}\in E$ and $\Omega_{ij}=0$ for $\{i,j\}\notin E$. The diagonal entries are defined such that the rows and columns sum to zero: $\Omega_{ii}=-\sum_{j\neq i}\Omega_{ij}$. An equivalent definition of the stress matrix, which will be used here, is the following.
\begin{definition}\label{dfn:defStressMat}
A stress matrix of a framework is a $v\times v$ matrix $\Omega$ such that:
\begin{enumerate}
\item for $i,j\in V$, $\Omega_{i,j}=\Omega_{j,i}$;
\item for $i,j\in V$, $i\neq j$ and $\{i,j\}\notin E$, $\Omega_{i,j}=0$;
\item $\sum_{j\in V}\Omega_{i,j}=0$ for all $i\in V$;
\item $\sum_{j\in V}\Omega_{i,j}\cdot\mathbf{p}_j=0$ for all $i\in V$;
\end{enumerate}
\end{definition}
Properties (1)--(3) follow directly from the construction of stress matrix described above. In addition,
$\sum_{j}\Omega_{i,j}\cdot\mathbf{p}_j=\sum_{j\neq i}\Omega_{i,j}\cdot\mathbf{p}_j+
\Omega_{i,i}\cdot\mathbf{p}_i=\sum_{j\neq i}\Omega_{i,j}\cdot\mathbf{p}_j-\sum_{j\neq i}\Omega_{i,j}\cdot\mathbf{p}_i=
\sum_{}w_{ij}(\mathbf{p}_j-\mathbf{p}_i)=0$, which verifies (4).

The following two theorems establish characterization of universal rigidity by the stress matrix.
\begin{theorem} [Connelly \cite{cnch2}, Gortler-Thurston \cite{gt}]\label{thm:UR}
A generic framework $G(\mathbf{p})$ in $\mathbb{R}^{d}$, having at least $d+2$ vertices, is universally rigid if and only if it has a positive semidefinite stress matrix with nullity $d+1$.
\end{theorem}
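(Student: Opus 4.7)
The plan is to handle the two directions separately, since they have quite different flavors. Sufficiency, due to Connelly, is a direct energy argument that in fact works for any framework in general position (not merely generic), while necessity, due to Gortler and Thurston, is a semidefinite programming duality argument that genuinely exploits genericity.

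For sufficiency, suppose $\Omega$ is a PSD stress matrix of nullity $d+1$ for $G(\mathbf{p})$. Given any equivalent framework $G(\mathbf{q})$ in $\mathbb{R}^{d'}$, I would introduce the quadratic energy
$$E_\Omega(X) = \sum_{\{i,j\}\in E} w_{ij}\|\mathbf{x}_i - \mathbf{x}_j\|^2 = \operatorname{tr}(X^T \Omega X),$$
where $X$ is the $v \times d'$ configuration matrix with rows $\mathbf{x}_i$. Edge-length preservation gives $E_\Omega(Q) = E_\Omega(P)$. By properties (3) and (4) of Definition \ref{dfn:defStressMat}, both $\mathbf{1}$ and every coordinate column of $P$ lie in $\ker(\Omega)$, so $E_\Omega(P) = 0$. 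Because $\Omega \succeq 0$, the equation $\operatorname{tr}(Q^T\Omega Q) = 0$ forces every column of $Q$ into $\ker(\Omega)$. Since $\ker(\Omega)$ has dimension exactly $d+1$ and already contains $\mathbf{1}$ together with the $d$ coordinate columns of $P$ (which together span a $(d+1)$-dimensional space, using general position and $v \geq d+2$), the columns of $Q$ are affine combinations of those of $P$. Thus $\mathbf{q} = A\mathbf{p} + b$ for some linear $A$ and translation $b$. Edge-length preservation then forces $A^T A$ to act as the identity on the span of the edge vectors, and general position upgrades this to a genuine congruence $\mathbf{p} \cong \mathbf{q}$.

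For necessity, I would encode equivalent frameworks via Gram matrices. After translating so $\sum_i \mathbf{p}_i = 0$, the set of Gram matrices of frameworks equivalent to $G(\mathbf{p})$ in some $\mathbb{R}^{d'}$ is
$$\mathcal{F} = \bigl\{ M \succeq 0 : \langle (e_i - e_j)(e_i - e_j)^T, M\rangle = \|\mathbf{p}_i - \mathbf{p}_j\|^2 \text{ for } \{i,j\}\in E,\ M\mathbf{1} = 0\bigr\}.$$
Universal rigidity says every element of $\mathcal{F}$ has rank at most $d$ and equals $P P^T$ up to orthogonal change of basis. The dual of the SDP over $\mathcal{F}$ has optimal solutions which are precisely PSD stress matrices. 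The claim is that for generic $\mathbf{p}$ there is a strictly complementary primal-dual pair, whence the rank of a primal optimum ($d$) and the rank of the dual optimum add up to $v$, forcing the dual rank to be $v - d - 1$ and hence the nullity of $\Omega$ to be exactly $d+1$.

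The main obstacle is the necessity direction: producing \emph{some} PSD stress matrix from SDP duality is routine, but pinning down the \emph{sharp} nullity $d+1$ requires the Gortler-Thurston analysis that strict complementarity holds generically and that no lower-dimensional faithful realization can appear on the boundary of $\mathcal{F}$. For that step I would appeal directly to their argument rather than attempt to reconstruct it from scratch.
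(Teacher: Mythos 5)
This theorem is not proved in the paper at all; it is imported from Connelly and Gortler--Thurston with a citation, so there is no internal proof to compare against and the relevant question is whether your outline would actually reconstruct the literature argument. Your necessity half is explicitly deferred to Gortler--Thurston, which is fair (the paper does the same), but be aware that their proof is not a routine strict-complementarity statement: strict complementarity is not automatic for these SDPs, and the substance of their paper is showing that genericity collapses the iterated, facial-reduction-style certificates of universal rigidity into a single PSD stress of nullity exactly $d+1$. So as a proof the proposal is not self-contained there, though as a pointer it is accurate.

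The genuine gap is in the sufficiency sketch. The energy argument correctly yields that any equivalent configuration is an affine image of $\mathbf{p}$, say $\mathbf{q}_i = A\mathbf{p}_i + b$. But edge-length preservation only gives $(\mathbf{p}_i-\mathbf{p}_j)^T(A^TA - I)(\mathbf{p}_i-\mathbf{p}_j) = 0$ for each edge, i.e.\ the edge directions are isotropic vectors of the quadratic form $A^TA - I$; it does \emph{not} force $A^TA$ to act as the identity on the span of the edge directions, since a nonzero quadratic form can vanish on a spanning set (the form $xy$ vanishes on both coordinate axes). Ruling out such a nontrivial form is precisely Connelly's condition that the edge directions do not lie on a conic at infinity, and it is the nontrivial content of this direction: one must argue that a generic (or general-position) framework carrying a PSD stress of nullity $d+1$ has enough edges in sufficiently general directions --- e.g.\ every vertex has degree at least $d+1$, after which genericity or the Alfakih--Ye general-position argument (Theorem \ref{thm:AK} in this paper) forces $A^TA = I$. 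As written, ``general position upgrades this to a genuine congruence'' asserts the hard step rather than proving it, so either carry out that argument or cite Connelly/Alfakih--Ye for it explicitly.
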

\begin{theorem} [Alfakih \cite{ak}]\label{thm:AK}
Let $G$ be a framework in general position in $\mathbb{R}^{d}$, having at least $d+2$ vertices. If there exists a positive semidefinite stress matrix of $G$ of nullity $d+1$, then $G$ is universally rigid.
\end{theorem}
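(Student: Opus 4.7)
My strategy is to use the PSD stress matrix $\Omega$ as an energy certificate, then exploit general position to upgrade affine equivalence to Euclidean congruence. For any configuration $\mathbf{x} \in (\mathbb{R}^{d'})^v$ with $d' \geq d$, define the quadratic form
\begin{equation*}
E_\Omega(\mathbf{x}) = \operatorname{tr}(X^T\Omega X),
\end{equation*}
where $X$ is the $v \times d'$ matrix whose $i$th row is $\mathbf{x}_i^T$. Expanding the trace and using properties (1)--(3) of Definition \ref{dfn:defStressMat}, one checks that $E_\Omega(\mathbf{x}) = -\sum_{\{i,j\}\in E} w_{ij}\,\|\mathbf{x}_i-\mathbf{x}_j\|^2$, so $E_\Omega(\mathbf{x})$ depends only on the edge lengths of $G(\mathbf{x})$. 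Positive semidefiniteness of $\Omega$ gives $E_\Omega(\mathbf{x}) \geq 0$, while property (4) gives $\Omega P = 0$ and hence $E_\Omega(\mathbf{p}) = 0$, so $\mathbf{p}$ is a global minimizer.

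Assume $G(\mathbf{q})$ in $\mathbb{R}^{d'}$ is equivalent to $G(\mathbf{p})$. Since $E_\Omega$ only sees edge lengths, $E_\Omega(\mathbf{q}) = E_\Omega(\mathbf{p}) = 0$, and PSDness of $\Omega$ then forces $\Omega Q = 0$, i.e., every column of $Q$ lies in $\ker \Omega$. The kernel contains the $d$ columns of $P$ (by property (4)) and the all-ones vector $\mathbf{1}$ (by property (3)); the general position hypothesis together with $v \geq d+2$ makes these $d+1$ vectors linearly independent, and the nullity-$(d+1)$ assumption forces them to span all of $\ker \Omega$. Consequently $\mathbf{q}_i = A\mathbf{p}_i + \mathbf{b}$ for some $d' \times d$ matrix $A$ and vector $\mathbf{b} \in \mathbb{R}^{d'}$, so $\mathbf{q}$ is an affine image of $\mathbf{p}$.

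The remaining step is to show $A^T A = I_d$, which would make $A$ an isometric embedding and yield $\mathbf{p} \cong \mathbf{q}$. This is the main obstacle, and where the general position hypothesis must do nontrivial work: the edge-length equations only give $(\mathbf{p}_i - \mathbf{p}_j)^T(A^T A - I_d)(\mathbf{p}_i - \mathbf{p}_j) = 0$ for $\{i,j\} \in E$, and $E$ need not be rich enough for these equations alone to force $A^T A = I_d$ by linear algebra. Following Alfakih \cite{ak}, I would set $M = A^T A - I_d$ and argue that a nonzero symmetric $M$ vanishing on all edge differences would, combined with the general position of $\mathbf{p}$, produce a direction in $\ker \Omega$ outside the span of $P$ and $\mathbf{1}$ (roughly, by lifting an eigenvector of $M$ through the affine structure $\mathbf{q}=A\mathbf{p}+\mathbf{b}$), contradicting the nullity assumption. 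With $A^T A = I_d$ in hand, $A$ restricts to a Euclidean isometry on the affine hull of $\mathbf{p}$, $\mathbf{p}$ and $\mathbf{q}$ are congruent in $\mathbb{R}^{d'}$, and $G(\mathbf{p})$ is universally rigid.
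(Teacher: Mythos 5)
First, note that the paper does not prove this statement at all: it is imported from Alfakih \cite{ak} and used as a black box, so there is no internal proof to compare against. Your first two paragraphs reproduce the standard Connelly energy argument correctly: equivalence gives $E_\Omega(\mathbf{q})=E_\Omega(\mathbf{p})=0$, positive semidefiniteness then forces $\Omega Q=0$, and general position with nullity $d+1$ (Lemma \ref{lma:projker}) identifies $\ker\Omega$ with the span of the coordinate projections of $\mathbf{p}$ and $(1,\ldots,1)^T$, so that $\mathbf{q}_i=A\mathbf{p}_i+\mathbf{b}$. Up to that point everything is fine, but it is also exactly what Connelly's sufficiency argument already gives for any framework with a PSD stress matrix of maximal nullity.

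The genuine gap is the step you defer: proving $A^TA=I_d$, i.e.\ ruling out non-congruent affine images. That is not a finishing touch; it is the entire content of Alfakih's theorem beyond Connelly's, namely that general position plus a PSD stress matrix of nullity $d+1$ forbids nontrivial affine flexes (equivalently, forbids a nonzero symmetric $M$ with $(\mathbf{p}_i-\mathbf{p}_j)^T M(\mathbf{p}_i-\mathbf{p}_j)=0$ on all edges, the ``conic at infinity'' condition). Your sketched mechanism --- lift an eigenvector of $M$ to a kernel direction of $\Omega$ outside the span of $P$ and $\mathbf{1}$ --- cannot work as stated: the coordinate projections of any affine image $\mathbf{q}=A\mathbf{p}+\mathbf{b}$ are linear combinations of the columns of $P$ and $\mathbf{1}$, so they already lie inside the known kernel and produce no new kernel vector, regardless of whether $A^TA=I_d$. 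The actual Alfakih--Ye argument is a separate, nontrivial analysis (via the Gale matrix associated with $\Omega$ and the general-position hypothesis) showing no such nonzero $M$ exists; nothing in your proposal substitutes for it. As written, the proposal reduces the theorem to its key lemma and then asserts the lemma, so it is not a proof.
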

We will conclude this section by developing some properties of the stress matrices that will be needed later.
\begin{lemma}\label{lma:p1matrix}
Let $A$ be a $d\times v$ matrix having $\mathbf{p}_1, \ldots ,\mathbf{p}_v$ as its columns. Assume a matrix $B$ is obtained from $A$ by appending a row $(1,\ldots ,1)$. Then $\mathbf{p}$ is in general position if and only if every $(d+1)\times (d+1)$ submatrix of $B$ has full rank.
\end{lemma}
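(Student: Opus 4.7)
The plan is to reduce to a standard linear-algebra characterization of affine independence: $d+1$ points $\mathbf{p}_{i_1}, \ldots, \mathbf{p}_{i_{d+1}}$ in $\mathbb{R}^d$ are affinely independent if and only if the $(d+1)\times(d+1)$ matrix
\[
M_{i_1,\ldots,i_{d+1}} = \begin{pmatrix} \mathbf{p}_{i_1} & \mathbf{p}_{i_2} & \cdots & \mathbf{p}_{i_{d+1}} \\ 1 & 1 & \cdots & 1 \end{pmatrix}
\]
has full rank. Since every $(d+1)\times(d+1)$ submatrix of $B$ is exactly of the form $M_{i_1,\ldots,i_{d+1}}$ for some choice of column indices $i_1,\ldots,i_{d+1}$, and general position is by definition the affine independence of every such subset, the lemma follows as soon as this equivalence is verified.

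To verify the equivalence, I would argue directly from the definition of the kernel. A nontrivial relation $M_{i_1,\ldots,i_{d+1}}\,\mathbf{c} = 0$ with $\mathbf{c} = (c_1,\ldots,c_{d+1})^T$ unpacks into the two conditions $\sum_k c_k \mathbf{p}_{i_k} = 0$ and $\sum_k c_k = 0$, which is precisely the statement that the points $\mathbf{p}_{i_1},\ldots,\mathbf{p}_{i_{d+1}}$ are affinely dependent. Equivalently, using the last condition to eliminate (say) $c_1 = -\sum_{k\geq 2}c_k$, the first condition becomes $\sum_{k\geq 2}c_k(\mathbf{p}_{i_k} - \mathbf{p}_{i_1}) = 0$, so the existence of a nontrivial $\mathbf{c}$ in the kernel is equivalent to the linear dependence of the displacement vectors $\mathbf{p}_{i_k}-\mathbf{p}_{i_1}$, $k = 2,\ldots,d+1$, which is the usual definition of affine dependence.

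Putting the two directions together: $\mathbf{p}$ is in general position iff for every choice of indices $i_1,\ldots,i_{d+1}$ the points $\mathbf{p}_{i_1},\ldots,\mathbf{p}_{i_{d+1}}$ are affinely independent, iff $M_{i_1,\ldots,i_{d+1}}$ has trivial kernel for every such choice, iff every $(d+1)\times(d+1)$ submatrix of $B$ has full rank. There is no real obstacle here; the lemma is essentially a repackaging of a standard fact, and the only care needed is to match the combinatorics of choosing columns of $B$ with the combinatorics of choosing a $(d+1)$-subset of vertices.
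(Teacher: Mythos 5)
Your proof is correct and follows essentially the same route as the paper: both identify a rank-deficient $(d+1)\times(d+1)$ submatrix of $B$ with a nontrivial relation $\sum_k c_k \mathbf{p}_{i_k}=0$, $\sum_k c_k=0$, i.e.\ affine dependence of the corresponding $d+1$ points. Your extra step eliminating $c_1$ to recover linear dependence of the displacement vectors just spells out the equivalence the paper takes as the definition of affine dependence, so nothing is missing.
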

\begin{proof}
If such a submatrix, having columns $\mathbf{p}_{i_{k}}$, $k=1,\ldots ,d+1$, is not full rank, then $\sum{\alpha_{k}}\left(\begin{array}{c}\mathbf{p}_{i_{k}}\\1\end{array}\right)=0$ for some non-trivial $\{\alpha_{k}\}$. But this holds if and only if $\sum{\alpha_{k}\mathbf{p}_{i_{k}}}=0$ and $\sum{\alpha_{k}}=0$, i.e. when $\mathbf{p}_{i_{k}}$ are affinely dependent.
\end{proof}
\begin{corollary}\label{cor:genposopen}
The set of configurations in general position is an open set.
\end{corollary}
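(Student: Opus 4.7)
The plan is to deduce openness directly from the Lemma by observing that the general-position condition can be re-expressed as the non-vanishing of finitely many polynomials in the coordinates of $\mathbf{p}$. More precisely, I would first note that the entries of the matrix $B$ from the Lemma are either constants or coordinates of $\mathbf{p}$, so the determinant of each $(d+1)\times(d+1)$ submatrix of $B$ is a polynomial in the $vd$ coordinates of $\mathbf{p}$, and hence is a continuous function on $\mathbb{R}^{vd}$.

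Next, by the Lemma, $\mathbf{p}$ is in general position if and only if every such submatrix has full rank, equivalently, every such determinant is nonzero. Since there are only $\binom{v}{d+1}$ choices of $d+1$ columns from $B$, this is a finite collection of conditions. I would then write the set of general-position configurations as
\begin{equation*}
\bigcap_{S}\,\det_S^{-1}\bigl(\mathbb{R}\setminus\{0\}\bigr),
\end{equation*}
where $S$ ranges over the $\binom{v}{d+1}$ subsets of size $d+1$ of $\{1,\ldots,v\}$, and $\det_S$ denotes the corresponding submatrix determinant viewed as a function of $\mathbf{p}\in\mathbb{R}^{vd}$.

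Finally, since each $\det_S$ is continuous and $\mathbb{R}\setminus\{0\}$ is open in $\mathbb{R}$, each preimage $\det_S^{-1}(\mathbb{R}\setminus\{0\})$ is open in $\mathbb{R}^{vd}$. A finite intersection of open sets is open, which gives the desired conclusion. There is no real obstacle here; the content of the corollary is entirely carried by the Lemma, and this argument is essentially a one-line consequence of the polynomial (hence continuous) nature of determinants.
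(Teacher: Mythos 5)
Your proof is correct and matches the intended argument: the paper states this corollary as an immediate consequence of Lemma \ref{lma:p1matrix}, precisely because general position is the simultaneous non-vanishing of the finitely many $(d+1)\times(d+1)$ subdeterminants of $B$, each a polynomial (hence continuous) function of $\mathbf{p}$. Your finite intersection of open preimages is exactly this reasoning, spelled out.
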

\begin{lemma}\label{lma:projker}
If $\Omega$ is a PSD stress matrix of nullity $d+1$, corresponding to a framework $G(\mathbf{p})$ in general position, then the coordinate projections of $\mathbf{p}$ and the vector $(1,\ldots ,1)^T$ are a basis for $\ker(\Omega)$.
\end{lemma}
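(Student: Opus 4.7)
The plan is to exhibit $d+1$ explicit vectors lying in $\ker(\Omega)$ and then show they are linearly independent; since $\dim \ker(\Omega) = d+1$ by hypothesis, independence will force them to be a basis. Let $\mathbf{p}^{(k)} \in \mathbb{R}^v$ denote the coordinate projection whose $i$-th entry is the $k$-th coordinate of $\mathbf{p}_i$, for $k=1,\ldots,d$, and let $\mathbf{1} = (1,\ldots,1)^T$. These are the $d+1$ candidate basis vectors.

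First I would verify membership in the kernel. Property (3) of Definition \ref{dfn:defStressMat} states that every row of $\Omega$ sums to zero, which is exactly $\Omega \mathbf{1} = 0$. Property (4) states $\sum_{j} \Omega_{i,j}\,\mathbf{p}_j = 0 \in \mathbb{R}^d$ for every $i$; reading this coordinate-by-coordinate yields $\Omega\,\mathbf{p}^{(k)} = 0$ for each $k=1,\ldots,d$. Hence $\mathbf{1}, \mathbf{p}^{(1)}, \ldots, \mathbf{p}^{(d)}$ all lie in $\ker(\Omega)$.

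Next I would establish linear independence, which is where the general position hypothesis enters. Suppose a linear combination vanishes:
\begin{equation*}
\alpha_0 \mathbf{1} + \sum_{k=1}^d \alpha_k \mathbf{p}^{(k)} = 0 \in \mathbb{R}^v.
\end{equation*}
Reading the $i$-th entry, this says $\alpha_0 + \sum_k \alpha_k (\mathbf{p}_i)_k = 0$ for every $i = 1,\ldots,v$. Equivalently, the row vector $(\alpha_1,\ldots,\alpha_d,\alpha_0)$ annihilates every column of the matrix $B$ of Lemma \ref{lma:p1matrix}, so the rows of $B$ are linearly dependent and every $(d+1) \times (d+1)$ submatrix of $B$ fails to have full rank. (Note $v \geq d+1$ since $\Omega$ has nullity $d+1$, so such submatrices exist.) By Lemma \ref{lma:p1matrix}, this contradicts the fact that $\mathbf{p}$ is in general position, unless $\alpha_0 = \alpha_1 = \cdots = \alpha_d = 0$.

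The only mildly delicate step is the last one, translating the linear dependence among the vectors $\mathbf{1}, \mathbf{p}^{(k)}$ in $\mathbb{R}^v$ into an affine dependence among points $\mathbf{p}_{i_1},\ldots,\mathbf{p}_{i_{d+1}}$ in $\mathbb{R}^d$; but Lemma \ref{lma:p1matrix} packages precisely this equivalence, so the conversion is immediate. Having produced $d+1$ linearly independent vectors inside the $(d+1)$-dimensional subspace $\ker(\Omega)$, they must form a basis, completing the proof.
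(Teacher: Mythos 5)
Your proposal is correct and follows essentially the same route as the paper: kernel membership comes from properties (3) and (4) of Definition \ref{dfn:defStressMat}, and linear independence is obtained from the general position hypothesis via Lemma \ref{lma:p1matrix}, with a dimension count finishing the argument. You merely spell out the independence step (a vanishing combination would make every $(d+1)\times(d+1)$ submatrix of $B$ singular) where the paper states it tersely as full rank of $B^T$.
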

\begin{proof}
We note that the equation in (4) of Definition \ref{dfn:defStressMat} is a vector equation, and so it is satisfied for each of the coordinate projections. Vector $(1,\ldots ,1)^T$ satisfies the same equation by (3). Taking $d$ coordinate projections' column vectors, vector $(1,\ldots ,1)^T$ and forming into a matrix, we get a matrix $B$ (as in Lemma \ref{lma:p1matrix}) transposed, which is of full rank. Therefore the coordinate projections and vector $(1,\ldots ,1)^T$ are $d+1$ linearly independent vectors in $\ker(\Omega)$, and hence they are the basis.
\end{proof}
In the following lemma we assume a framework in general position has a PSD stress matrix of nullity $d+1$.
\begin{lemma}\label{lma:OmegaDiag}
For a framework in general position, its PSD stress matrix of nullity $d+1$ can be diagonalized by an invertible matrix whose first $d+1$ columns are coordinate projections of $\mathbf{p}$ and a vector $(1,\ldots ,1)^T$.
\end{lemma}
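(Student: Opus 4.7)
The plan is to combine Lemma \ref{lma:projker} with the spectral theorem for real symmetric matrices. Since $\Omega$ is symmetric PSD with nullity exactly $d+1$, the spectral theorem guarantees an orthonormal basis of $\mathbb{R}^v$ consisting of eigenvectors of $\Omega$: exactly $d+1$ of them lie in $\ker(\Omega)$ with eigenvalue $0$, and the remaining $v-d-1$ lie in $\ker(\Omega)^\perp = \im(\Omega)$ with strictly positive eigenvalues.

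The key observation is that diagonalization only requires the columns of the change-of-basis matrix to be eigenvectors and to form a basis; they need not be pairwise orthogonal. In the kernel any basis works, since every nonzero vector there is an eigenvector for the eigenvalue $0$. By Lemma \ref{lma:projker}, the $d$ coordinate projections of $\mathbf{p}$ together with $(1,\ldots,1)^T$ are linearly independent vectors in $\ker(\Omega)$, hence form a basis for $\ker(\Omega)$.

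Therefore I would construct $P$ as the $v\times v$ matrix whose first $d+1$ columns are precisely these coordinate-projection vectors and $(1,\ldots,1)^T$, and whose remaining $v-d-1$ columns are an orthonormal eigenbasis $u_1,\ldots,u_{v-d-1}$ of $\Omega$ restricted to $\im(\Omega)$, with corresponding positive eigenvalues $\lambda_1,\ldots,\lambda_{v-d-1}$. Using the direct sum decomposition $\mathbb{R}^v = \ker(\Omega) \oplus \ker(\Omega)^\perp$ (valid since $\Omega$ is symmetric), the columns of $P$ form a basis of $\mathbb{R}^v$, so $P$ is invertible. Since every column $c_i$ of $P$ satisfies $\Omega c_i = \mu_i c_i$ for some $\mu_i$ (either $0$ or one of the $\lambda_k$), we get $\Omega P = P D$ with $D = \operatorname{diag}(0,\ldots,0,\lambda_1,\ldots,\lambda_{v-d-1})$, whence $P^{-1}\Omega P = D$.

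There is no real obstacle here; the only point needing care is to observe that one may freely replace the orthogonal kernel-basis provided by the spectral theorem with the specific non-orthogonal basis given by Lemma \ref{lma:projker}, while still retaining orthonormal eigenvectors on the complementary subspace. This works because $\ker(\Omega)$ is itself an eigenspace, and any basis of an eigenspace consists of eigenvectors.
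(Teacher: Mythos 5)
Your proposal is correct and follows essentially the same route as the paper: invoke the spectral theorem, keep the eigenvectors for the positive eigenvalues, and replace the orthonormal kernel eigenvectors by the coordinate projections of $\mathbf{p}$ and $(1,\ldots ,1)^T$, which by Lemma \ref{lma:projker} form a basis of $\ker(\Omega)$, then check invertibility of the resulting change-of-basis matrix. The paper argues invertibility by noting the new first $d+1$ columns span the same subspace as the replaced ones, while you use the decomposition $\mathbb{R}^v=\ker(\Omega)\oplus\ker(\Omega)^{\perp}$; these are the same point stated slightly differently.
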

\begin{proof}
Let $\Omega$ be the matrix. Since $\Omega$ is symmetric, $\Omega=U\Lambda U^T$ or, $\Omega U=U\Lambda$, where $U$ is a unitary matrix of eigenvectors corresponding to eigenvalues of $\Omega$ arranged on the diagonal of $\Lambda$ \cite{hj}. Because $\Omega$ is a PSD matrix of nullity $d+1$, $\Lambda$ has $d+1$ zeros and $v-d-1$ positive values on its diagonal. By re-ordering columns of $U$ and corresponding diagonal entries of $\Lambda$ we make the first $d+1$ columns of $U$ to be the eigenvectors corresponding to 0-eigenvalues which are the first $d+1$ entries of the diagonal of $\Lambda$. We then replace these $d+1$ columns of $U$ with (non-orthonormal) set of $d$ coordinate projection vectors and a vector $(1,\ldots ,1)^T$, which are also the eigenvectors corresponding to 0-eigenvalues. This results in a new matrix $S$ such that $\Omega S=S\Lambda$. Since the first $d+1$ columns of $U$ and $S$ are a basis for the same subspace (in particular, each of the first $d+1$ columns of $S$ can be expressed as a linear combination of the first $d+1$ columns of $U$), it is easy to show that the set of all columns of $S$ is linearly independent, which implies that $S$ is invertible. Therefore $\Omega=S\Lambda S^{-1}$.
\end{proof}

For general $v\times v$ positive semidefinite matrices the following is also true:
\begin{lemma}\label{lma:kerSumMat}
Let $A$, $B$ be PSD matrices. Then $\ker(A+B)=\ker(A)\cap\ker(B)$.
\end{lemma}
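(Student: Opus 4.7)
The plan is to prove the two inclusions separately, with the nontrivial direction relying on the standard fact that for a PSD matrix $M$, the quadratic form $x^{T}Mx$ vanishes if and only if $Mx=0$.

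First I would dispose of the easy inclusion $\ker(A)\cap\ker(B)\subseteq\ker(A+B)$: if $Ax=0$ and $Bx=0$, then $(A+B)x=Ax+Bx=0$, so $x\in\ker(A+B)$. This is immediate and requires no hypothesis beyond linearity.

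For the reverse inclusion $\ker(A+B)\subseteq\ker(A)\cap\ker(B)$, suppose $x\in\ker(A+B)$. Then in particular $x^{T}(A+B)x=0$, so $x^{T}Ax+x^{T}Bx=0$. Since $A$ and $B$ are PSD, both summands are nonnegative, hence both vanish: $x^{T}Ax=0$ and $x^{T}Bx=0$. The main (and only nonroutine) step is now to deduce $Ax=0$ from $x^{T}Ax=0$ when $A$ is PSD. I would do this by writing $A=R^{T}R$ (for instance via a Cholesky-type or symmetric square-root factorization, which exists because $A$ is PSD), so that $x^{T}Ax=\|Rx\|^{2}$. Then $x^{T}Ax=0$ forces $Rx=0$, and consequently $Ax=R^{T}(Rx)=0$. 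The identical argument gives $Bx=0$, so $x\in\ker(A)\cap\ker(B)$.

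The hard part, if any, is justifying the PSD square-root factorization, but this is a standard fact from linear algebra that can simply be cited (e.g.\ from Horn--Johnson), so the whole proof should be essentially two short lines once the quadratic form manipulation is in place.
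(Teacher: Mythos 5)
Your proof is correct and follows essentially the same route as the paper: both reduce to showing $x^{T}Ax=0$ forces $Ax=0$ for PSD $A$, via a square-root factorization (the paper writes $x^{T}Ax=\langle A^{1/2}x,A^{1/2}x\rangle$ with $A^{1/2}=U\Lambda^{1/2}U^{T}$, which is your $A=R^{T}R$ with $R=A^{1/2}$). No gaps; nothing further needed.
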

\begin{proof}
If $x\in\ker(A)\cap\ker(B)$, then $(A+B)x=Ax+Bx=0+0=0$, so $\ker(A)\cap\ker(B)\subseteq\ker(A+B)$.
Conversely, if $x\in\ker(A+B)$, $0=x^{T}(A+B)x=x^{T}Ax+x^{T}Bx$, therefore $x^{T}Ax=x^{T}Bx=0$.
Since $x^{T}Ax=\langle A^{1/2}x,A^{1/2}x\rangle$, where $A^{1/2}=U\Lambda^{1/2} U^{T}$ and hence $\ker(A^{1/2})=\ker(A)$,
we have $0=A^{1/2}x=Ax$, and so $x\in\ker(A)$. Similarly, $x\in\ker(B)$, hence $\ker(A+B)\subseteq\ker(A)\cap\ker(B)$.
\end{proof}

\begin{lemma}\label{lma:omegaepsilon}
For any PSD matrix $\Omega_{1}$ and a matrix $\Omega_{2}$ satisfying $\ker(\Omega_{1})\subseteq\ker(\Omega_{2})$, there is $c > 0$ such that $c\Omega_{1}+\Omega_{2}$ is a PSD matrix with the same nullity as $\Omega_{1}$.
\end{lemma}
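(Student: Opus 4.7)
The plan is to pass to a convenient basis in which both $\Omega_1$ and $\Omega_2$ are block diagonal, after which the statement reduces to the familiar fact that a positive definite matrix can absorb any symmetric perturbation provided its coefficient is large enough. Note that for $c\Omega_1 + \Omega_2$ to be PSD we implicitly assume $\Omega_2$ is symmetric (otherwise the conclusion is vacuous).

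First I would use the spectral theorem to write $\Omega_1 = U\Lambda U^T$ with $U$ orthogonal and $\Lambda$ diagonal, reordered so that
\[
\Lambda = \begin{pmatrix} \Lambda_+ & 0 \\ 0 & 0 \end{pmatrix},
\]
where $\Lambda_+$ is an $(n-k)\times(n-k)$ diagonal matrix with strictly positive entries and $k$ is the nullity of $\Omega_1$. In this basis, $\ker(\Omega_1)$ is the span of the last $k$ standard basis vectors. The hypothesis $\ker(\Omega_1)\subseteq\ker(\Omega_2)$ then forces the last $k$ columns of $U^T\Omega_2 U$ to vanish, and by symmetry of $\Omega_2$ the last $k$ rows vanish as well. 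Hence
\[
U^T\Omega_2 U = \begin{pmatrix} A & 0 \\ 0 & 0 \end{pmatrix}
\]
for some symmetric $(n-k)\times(n-k)$ matrix $A$.

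Next I would combine the two, obtaining
\[
U^T(c\Omega_1+\Omega_2)U = \begin{pmatrix} c\Lambda_+ + A & 0 \\ 0 & 0 \end{pmatrix}.
\]
Let $\lambda_{\min}>0$ be the smallest entry of $\Lambda_+$ and $\|A\|$ the operator norm of $A$. For any $c > \|A\|/\lambda_{\min}$, the top-left block satisfies $c\Lambda_+ + A \succeq (c\lambda_{\min} - \|A\|)I \succ 0$, so it is positive definite. Consequently $U^T(c\Omega_1+\Omega_2)U$ is PSD with kernel equal to $\mathrm{span}(e_{n-k+1},\ldots,e_n)$, and conjugating back by $U$ shows that $c\Omega_1+\Omega_2$ is PSD with the same nullity $k$ as $\Omega_1$.

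The only mild subtlety, and likely the one point that requires explicit mention, is the step identifying the block-diagonal structure of $\Omega_2$ in the eigenbasis of $\Omega_1$: this uses both the kernel inclusion hypothesis and the symmetry of $\Omega_2$. Everything else is the standard observation that positive definiteness is preserved under bounded symmetric perturbations scaled down by a large factor.
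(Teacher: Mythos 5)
Your proof is correct, but it takes a genuinely different route from the paper's. The paper argues by eigenvalue perturbation: it writes the candidate matrix as $\Omega_{1}+\epsilon\Omega_{2}$, invokes Weyl's inequality $|\lambda_{l}-\lambda'_{l}|\leq\epsilon\|\Omega_{2}\|$ to keep the $r=\rank(\Omega_{1})$ positive eigenvalues positive for $\epsilon=\lambda_{[r]}(\Omega_{1})/(2\|\Omega_{2}\|)$, and then shows the remaining eigenvalues stay zero by contradiction, since $\ker(\Omega_{1})\subseteq\ker(\Omega_{2})$ forces $\ker(\Omega_{1})\subseteq\ker(\Omega_{1}+\epsilon\Omega_{2})$ and hence the nullity cannot drop; finally it rescales by $c=\epsilon^{-1}$. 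You instead use the kernel inclusion structurally at the outset: in the eigenbasis of $\Omega_{1}$ the last $k$ columns of $U^{T}\Omega_{2}U$ vanish (and, by symmetry of $\Omega_{2}$, the last $k$ rows as well), so both matrices become simultaneously block diagonal and the claim reduces to the elementary fact that a positive definite block $c\Lambda_{+}$ absorbs the bounded symmetric perturbation $A$ once $c>\|A\|/\lambda_{\min}$. Your version avoids Weyl's theorem entirely and exhibits the kernel of $c\Omega_{1}+\Omega_{2}$ explicitly as $\ker(\Omega_{1})$ (which also follows from the paper's version, given equal nullity and the obvious kernel inclusion); the paper's version needs only the operator norm of $\Omega_{2}$ rather than its block structure, and both produce essentially the same explicit constant. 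You were also right to flag that $\Omega_{2}$ must be symmetric for the conclusion to make sense: the paper needs this too (Weyl's theorem is a statement about Hermitian perturbations), and in the intended application $\Omega_{2}$ is a stress matrix, hence symmetric, so nothing is lost.
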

\begin{proof}
For a sufficiently small $\epsilon$, we can view $\Omega_{1}+\epsilon\Omega_{2}$ as perturbed matrix $\Omega_{1}$. Assume that $\Omega_{1}$ has rank $r$. Then, the matrix has $r$ positive eigenvalues $\lambda_{l}$ \cite{hj}, while the rest of the eigenvalues are 0. We can assume the eigenvalues are ordered such that $\lambda_{l}\geq\lambda_{l+1}>0$ for $l=1\ldots r-1$. By Weyl theorem \cite{weyl}, the perturbation of the eigenvalues of $\Omega_{1}$ is bounded:
$|\lambda_{l}-\lambda'_{l}|\leq \epsilon\|\Omega_{2} \|$ for all $l$. A sufficient condition for preserving positivity of the first $r$ perturbed eigenvalues is $|\lambda_{l}-\lambda'_{l}|<\lambda_{[r]}$, where $\lambda_{[r]}$ is minimal eigenvalue among the first $r$. Therefore by setting $\epsilon=\frac{\lambda_{[r]}(\Omega_{1})}{2\|\Omega_{2} \|}$, the first $r$ eigenvalues of $\Omega_{1}+\epsilon\Omega_{2}$ are positive.

The rest of the eigenvalues of $\Omega_{1}+\epsilon\Omega_{2}$ are zero. If this was not the case, then  $\rank (\Omega_{1}+\epsilon\Omega_{2})>\rank (\Omega_{1})$, or, $\dim(\ker(\Omega_{1}+\epsilon\Omega_{2}))<\dim(\ker(\Omega_{1}))$. From other side, since $\ker(\Omega_{1})\subseteq\ker(\Omega_{2})=\ker(\epsilon\Omega_{2})$ for $\epsilon>0$, we must have $\ker(\Omega_{1})\subseteq \ker(\Omega_{1}+\epsilon\Omega_{2})$, which implies $\dim(\ker(\Omega_{1}+\epsilon\Omega_{2}))\geq\dim(\ker(\Omega_{1}))$, a contradiction. Therefore $\Omega_{1}+\epsilon\Omega_{2}$ has $r$ positive eigenvalues while the rest are zero, and so the matrix is positive semidefinite with the same nullity as $\Omega_{1}$. By scaling the matrix by $c=\epsilon^{-1}$, the conclusion of the lemma follows.
\end{proof}
\end{section}
\begin{section}{Framework attachments: main results}\label{sec:Main}
In this section we will address the main question of this paper: given two universally rigid frameworks, under which conditions their attachment is also universally rigid?

\begin{theorem}\label{thm:OmegaMain}
A framework attachment of two universally rigid frameworks in general position in $\mathbb{R}^{d}$, not joined on all vertices, is universally rigid if and only if the number of shared vertices is greater or equal $d+1$.
\end{theorem}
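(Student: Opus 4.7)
The plan is to prove the two directions separately. For necessity, observe that universal rigidity implies global rigidity, so by the Hendrickson-style Theorem \ref{thm:globconn} the attachment must be $(d+1)$-connected. Since the two frameworks are not joined on all vertices, both $V_A \setminus V_B$ and $V_B \setminus V_A$ are nonempty, and because every edge of the attachment lies in $E_A \cup E_B$ (so no edge runs directly between $V_A \setminus V_B$ and $V_B \setminus V_A$), the shared set $V_A \cap V_B$ separates these two parts. Hence $(d+1)$-connectedness forces $|V_A \cap V_B| \geq d+1$.

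For sufficiency I would construct an explicit PSD stress matrix of nullity $d+1$ for the attachment and then invoke Alfakih's Theorem \ref{thm:AK}. Let $\Omega_A$ and $\Omega_B$ be the given PSD stress matrices of nullity $d+1$ for the two frameworks. My plan is to extend each to a $v \times v$ matrix $\tilde{\Omega}_A$, $\tilde{\Omega}_B$ on the full vertex set $V = V_A \cup V_B$ by padding the rows and columns indexed by vertices outside $V_A$ (resp.\ $V_B$) with zeros, and then set $\Omega := \tilde{\Omega}_A + \tilde{\Omega}_B$. Each extension is PSD and, using that shared vertices have identical coordinates in both configurations, satisfies properties (1)--(4) of Definition \ref{dfn:defStressMat} relative to the attachment; the sum therefore is a PSD stress matrix for the attachment.

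The remaining --- and delicate --- point is to pin down $\nullity(\Omega) = d+1$. By Lemma \ref{lma:kerSumMat}, $\ker(\Omega) = \ker(\tilde{\Omega}_A) \cap \ker(\tilde{\Omega}_B)$. A vector $x \in \mathbb{R}^v$ lies in $\ker(\tilde{\Omega}_A)$ exactly when $x|_{V_A} \in \ker(\Omega_A)$, which by Lemma \ref{lma:projker} means $x_i = f_A(\mathbf{p}_i)$ on $V_A$ for some affine function $f_A$ on $\mathbb{R}^d$; an analogous statement holds for $\tilde{\Omega}_B$ with some $f_B$. An element of the intersection therefore determines two affine functions $f_A, f_B$ that must coincide on $V_A \cap V_B$. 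Now the hypothesis $|V_A \cap V_B| \geq d+1$, together with Lemma \ref{lma:p1matrix} applied to the overlap (which inherits general position from the whole configuration), produces $d+1$ affinely independent shared vertices and forces $f_A = f_B$ as functions on $\mathbb{R}^d$. Consequently $\ker(\Omega)$ is exactly the span of the $d$ coordinate projections of $\mathbf{p}$ together with the all-ones vector, so $\nullity(\Omega) = d+1$ and Theorem \ref{thm:AK} gives universal rigidity of the attachment. I expect this last step --- matching the two ``local'' affine kernels across the overlap via general position --- to be the main obstacle, and it is precisely where the bound $|V_A \cap V_B| \geq d+1$ is used.
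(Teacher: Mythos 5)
Your necessity argument coincides with the paper's: universal rigidity implies global rigidity, Theorem \ref{thm:globconn} gives $(d+1)$-connectedness, and since no edge joins $V_A\setminus V_B$ to $V_B\setminus V_A$, the shared set is a separating set and must have at least $d+1$ vertices. That direction is fine. The sufficiency direction, however, has a genuine gap at its very first step: you work with ``the given PSD stress matrices $\Omega_A,\Omega_B$ of nullity $d+1$,'' but nothing in the hypotheses of Theorem \ref{thm:OmegaMain} supplies them. The two frameworks are only assumed universally rigid and in general position; the converse implication of Theorem \ref{thm:UR} (universal rigidity $\Rightarrow$ existence of a PSD stress matrix of nullity $d+1$) is available only for \emph{generic} frameworks, and Theorem \ref{thm:AK} is a one-way implication for general position. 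So within the paper's toolkit you cannot certify that $\Omega_A$ and $\Omega_B$ exist, and your argument actually proves the weaker statement obtained by combining Theorem \ref{thm:OmegaSum} with Theorem \ref{thm:AK}; this is exactly why the paper keeps those results separate (see Remark \ref{rmk:noUR}, where the stress-matrix constructions are presented as complementary to, not a route to, Theorem \ref{thm:OmegaMain}).

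The paper instead proves sufficiency by a short direct geometric argument that needs no stress matrices: for any equivalent configuration of the attachment, universal rigidity of $G_A$ allows a rigid motion bringing its $G_A$-part onto $\mathbf{p}_A$; then universal rigidity of $G_B$ yields a congruence of the $G_B$-part that fixes $d+1$ affinely independent shared points (here general position and $n\ge d+1$ enter), and a congruence fixing an affine spanning set of $\mathbb{R}^d$ is the identity, so the configuration is congruent to $\mathbf{p}$, in any ambient dimension. I will note that your kernel computation is not wasted: identifying $\ker(\widetilde{\Omega}_A)\cap\ker(\widetilde{\Omega}_B)$ with evaluations of a single affine function, matched across $d+1$ affinely independent shared vertices via Lemma \ref{lma:p1matrix} and Lemma \ref{lma:projker}, is correct and is arguably a cleaner way to get the nullity count in Theorem \ref{thm:OmegaSum} than the diagonalization argument given there --- but it cannot replace a proof of Theorem \ref{thm:OmegaMain} as stated unless you first establish that universally rigid general-position frameworks admit PSD stress matrices of nullity $d+1$, which the paper does not claim.
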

\begin{proof}
Assume that we are given the attachment $G$ in $\mathbb{R}^d$ of two universally rigid frameworks, $G_{A}$ and $G_{B}$, that are in general position. Denote the set of vertices in $G_{A}$ by $V_{A}$, the set of vertices of $G_{B}$ by $V_{B}$, and the set of shared vertices by $V_{C}=V_{A}\cap V_{B}$. If $|V_{C}|< d+1$, removing the shared vertices from $G$ leaves a disconnected graph, which means $G$ is not $(d+1)$-connected and, by Theorem \ref{thm:globconn}, not universally rigid.

To prove the converse, we express can express the configuration $\mathbf{p}$ of $G$ as $\mathbf{p}=(\mathbf{p}_{A},\mathbf{p}_{B\backslash C})$, where $\mathbf{p}_{A}$ is the configuration of $G_{A}$. Since $G_{A}$ is universally rigid, any configuration $\mathbf{p'}$ of $G$ satisfies $\mathbf{p}'_{A}\cong \mathbf{p}_{A}$. Therefore for any such $\mathbf{p'}$ there corresponds (by a rigid motion) a configuration $\mathbf{p}''=(\mathbf{p}_{A},\mathbf{p}''_{B\backslash C})$ such that $\mathbf{p''}\cong \mathbf{p'}$. We will now show that any such $\mathbf{p''}$ is congruent to $\mathbf{p}$, thus proving $\mathbf{p'}\cong \mathbf{p}$.

Without loss of generality, assume that the two frameworks are joined at $n\geq d+1$ vertices $v_1,\ldots ,v_n$. Since each framework is in general position, the coordinate vectors $\mathbf{p}_1,\ldots ,\mathbf{p}_{d+1}$ are affinely independent, i.e. $\mathbf{y}_{1}=\mathbf{p}_{2} - \mathbf{p}_{1},\ldots ,\mathbf{y}_{d}=\mathbf{p}_{d+1} - \mathbf{p}_{1}$ are linearly independent.
Since $G_{B}$ is universally rigid, $\mathbf{p}''_{B}\cong \mathbf{p}_{B}$. That means there is a congruency map $H: \mathbf{p}_{B}\to \mathbf{p}''_{B}$. The map fixes $\mathbf{p}_1,\ldots ,\mathbf{p}_{d+1}$, therefore it must be some linear transformation $A$ from the orthogonal group $O_d$ (we can assume, without loss of generality, that vertex $v_1$ is at the origin). The transformation $A$ also fixes the basis vectors $\mathbf{y}_{1},\ldots, \mathbf{y}_{d}$. Therefore, for a coordinate vector $\mathbf{p}_j = \sum_{i}{\alpha_{i} \mathbf{y}_{i}}$ of a vertex $v_j\in V_{B\backslash C}$, we have:
\begin{displaymath}
\mathbf{p}''_j=A \mathbf{p}_j = A\sum_{i}{\alpha_{i} \mathbf{y}_{i}} = \sum_{i}{\alpha_{i} (A\mathbf{y}_{i})} = \sum_{i}{\alpha_{i} \mathbf{y}_{i}} = \mathbf{p}_j,
\end{displaymath}
which shows that $\mathbf{p}''_{B\backslash C}=\mathbf{p}_{B\backslash C}$. Therefore, $\mathbf{p}''=\mathbf{p}$ and hence $\mathbf{p}'\cong\mathbf{p}$. Since $\mathbf{p}'$ is an arbitrary configuration of the attachment, and the above argument did not depend on the dimension $d$, this shows that the attachment is universally rigid.
\end{proof}
With this result, one may naturally ask:
\begin{question}\label{qtn:URGen}
What are the generating graphs for universally rigid graphs under graph attachment?
\end{question}
In other words, we are interested in identifying a smallest set of universally rigid graphs which would span, under finite attachment, the space of universally rigid graph attachments. At a first glance the space may resemble a semigroup with a set of generators. However, since there can be more than one way to attach a pair of graphs, the binary operation is not well defined. Leaving the attempt of finding the answer to the above question for a separate possible research,
we will now establish the second main result, which states that removing certain edges between shared vertices in the attachment preserves universal rigidity (see last diagram in Figure~\ref{fig:attach3D}).
\begin{definition}\label{dfn:EdgeReducedDef}
An \emph{edge-reduced framework attachment} is a framework obtained from framework attachment by removing those edges between the shared vertices that are inherited from only one of the two joined frameworks.
\end{definition}
\begin{theorem}\label{thm:EdgeReducedMain}
An edge-reduced framework attachment of two universally rigid frameworks is universally rigid.
\end{theorem}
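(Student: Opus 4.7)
My plan is to produce a positive semi-definite stress matrix of nullity $d+1$ for the edge-reduced attachment $G'(\mathbf p)$ and then invoke Alfakih's theorem (Theorem~\ref{thm:AK}); note that $\mathbf p$ is already in general position since both $\mathbf p_A$ and $\mathbf p_B$ are.

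First I would fix PSD stress matrices $\Omega_A,\Omega_B$ of nullity $d+1$ for $G_A(\mathbf p_A)$ and $G_B(\mathbf p_B)$, supplied by the universal-rigidity hypothesis on each factor. Zero-padding these to $v\times v$ matrices $\tilde\Omega_A,\tilde\Omega_B$, the sum $M:=\tilde\Omega_A+\tilde\Omega_B$ is a PSD stress matrix of the \emph{full} attachment $G(\mathbf p)$: PSD-ness is immediate, and by Lemma~\ref{lma:kerSumMat} its kernel is $\ker\tilde\Omega_A\cap\ker\tilde\Omega_B$. Lemma~\ref{lma:projker} describes each summand's kernel as the span of the coordinate projections and the all-ones vector on $V_A$ or $V_B$ respectively; since $|V_C|\geq d+1$ and the shared vertices are in general position, Lemma~\ref{lma:p1matrix} forces any element of the intersection to be a single affine combination of the coordinates of $\mathbf p$ and of $\mathbf 1$ extended to $V$, so $\ker M$ has dimension exactly $d+1$. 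This recovers the sufficiency direction of Theorem~\ref{thm:OmegaMain} and sets up the stress matrix now to be edited.

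The central obstacle is that the entries of $M$ at the ``removed'' pairs $\{i,j\}\subseteq V_C$ (those in $E_A\triangle E_B$) are generically nonzero, whereas a stress matrix for $G'$ must vanish there. My plan is to add a symmetric correction matrix $\tilde\Psi$ supported on the $V_C\times V_C$ block whose entries at removed edges are prescribed so as to cancel $M_{ij}$, which vanishes at non-edges of $G$ inside $V_C$, and whose remaining entries---on the diagonal and on the ``kept'' edges $E_A\cap E_B\cap\binom{V_C}{2}$---are chosen so that $\Psi$ satisfies the stress identities $\Psi\mathbf 1_{V_C}=0$ and $\Psi(\mathbf p|_{V_C})_k=0$ for each coordinate projection. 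Existence of such a $\Psi$ is a linear feasibility problem whose solvability hinges on $|V_C|\geq d+1$ and on general position of the shared vertices; this is the technical heart of the theorem and its explicit construction is deferred to Section~\ref{sec:EdgRedStressMat}. Once $\Psi$ is in hand, the stress identities yield $\ker M\subseteq\ker\tilde\Psi$, so Lemma~\ref{lma:omegaepsilon} applied to the PSD matrix $M$ and the perturbation $\tilde\Psi$ provides a positive scalar for which the combined matrix is PSD of nullity exactly $d+1$; using linearity of the feasibility system to rescale $\Psi$'s prescribed boundary values by the same factor preserves the vanishing pattern at removed edges, producing a PSD stress matrix of $G'(\mathbf p)$ with nullity $d+1$, whence Theorem~\ref{thm:AK} concludes universal rigidity.
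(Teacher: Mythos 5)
Your proposal takes a stress-matrix route, but it stumbles at the very first step: universal rigidity of $G_A(\mathbf{p}_A)$ and $G_B(\mathbf{p}_B)$ in general position does \emph{not} supply PSD stress matrices of nullity $d+1$; that converse is only known for \emph{generic} frameworks (Theorem~\ref{thm:UR}), while in general position only the sufficiency direction (Theorem~\ref{thm:AK}) is available. Relatedly, general position of $\mathbf{p}_A$ and $\mathbf{p}_B$ does not imply general position of the combined configuration (an affine dependence can mix vertices of $V_A\setminus V_C$ with vertices of $V_B\setminus V_C$), so the concluding appeal to Theorem~\ref{thm:AK} for the attachment is also not licensed. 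This is precisely why the paper proves Theorem~\ref{thm:EdgeReducedMain} by a short geometric argument with no stress matrices at all: pin the vertices of $G_A$ (legitimate because $G_A$ is universally rigid and the removed edges are not in $E_A$); since each removed edge joins two \emph{shared} vertices, its length is automatically determined once $\mathbf{p}_A$ is fixed, so on the slice $\pi^{-1}(\mathbf{p}_A)$ the removed-edge constraint is redundant and $f_{1}^{-1}(f_{1}(\mathbf{p}))\cap\pi^{-1}(\mathbf{p}_A)=f^{-1}(f(\mathbf{p}))\cap\pi^{-1}(\mathbf{p}_A)$, where $f,f_1$ are the edge functions of the attachment and the edge-reduced attachment and $\pi$ is projection onto the $G_A$-coordinates; universal rigidity of the full attachment (Theorem~\ref{thm:OmegaMain}) then forces every such configuration to be congruent to $\mathbf{p}$, and one inducts over the removed edges. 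The stress-matrix statement you are aiming at is the separate Theorem~\ref{thm:stressReduced}, which \emph{assumes} the PSD stress matrices are given and additionally that $G_A$ is infinitesimally rigid, a property not implied by universal rigidity plus a PSD nullity-$(d+1)$ stress matrix (Figure~\ref{fig:Tutte}, Remark~\ref{rmk:noUR}).

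Even granting $\Omega_A,\Omega_B$, your correction step fails at two points. First, a symmetric $\Psi$ supported on the $V_C\times V_C$ block satisfying $\Psi\mathbf{1}=0$ and $\sum_j\Psi_{ij}\mathbf{p}_j=0$ is an equilibrium self-stress of a framework on the shared vertices alone; when $|V_C|=d+1$ (the base case of the theorem) general position makes those points affinely independent, so each row of $\Psi$ encodes a trivial affine dependence and $\Psi=0$, which cannot cancel a nonzero $M_{ij}$ at a removed edge. Your linear feasibility problem is therefore infeasible exactly where it is most needed; the paper escapes this by spreading the counter-stress over all of $G_A$, solving $df^{T}w=\rho w_{1}$ for the edge-extended framework $\overline{G}_{A}$ (Proposition~\ref{prop:stressw1}), and that is precisely where infinitesimal rigidity of $G_A$ enters. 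Second, the final rescaling is circular: Lemma~\ref{lma:omegaepsilon} makes $cM+\tilde\Psi$ PSD, but its removed-edge entries are $cM_{ij}+\Psi_{ij}\neq 0$; replacing $\Psi$ by $c\Psi$ to restore the cancellation yields $c(M+\Psi)$, which is PSD if and only if $M+\Psi$ is---something never established. Theorem~\ref{thm:stressReduced} avoids this trap because the matrix that gets scaled, $\Omega_A$, vanishes on the removed edges, so boosting it does not disturb the cancellation.
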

\begin{proof}
Given the non edge-reduced attachment $G$ in $\mathbb{R}^d$ and the corresponding edge function $f$, we again denote the set of vertices in $G_{A}$ by $V_{A}$, the set of vertices of $G_{B}$ by $V_{B}$, and the set of shared vertices by $V_{C}$. As in the proof of Theorem \ref{thm:OmegaMain}, here it will also be sufficient to consider only the configurations of $G$ in which the vertices of $G_{A}$ are ``pinned''. Let $\pi$, defined by $\mathbf{p}_{A}=\pi(\mathbf{p})$, be the projection of the attachment configuration $\mathbf{p}=(\mathbf{p}_{A},\mathbf{p}_{B\backslash C})$ onto coordinates of the vertices of $G_{A}$.
Since $G$ is universally rigid (by Theorem \ref{thm:OmegaMain}), the set $f^{-1}(f(\mathbf{p}))$ of all edge-length preserving configurations consists of configurations congruent to $\mathbf{p}$.

Assume now we remove the edge $\{i,j\}$ of length $l_{1}$ between shared vertices that was inherited from $G_{B}$ only, resulting in an edge-reduced framework attachment $G_{1}$ with a corresponding edge function $f_{1}$. Since $G_{A}$ is universally rigid and the removed edge did not belong to $G_{A}$, any configuration $\mathbf{p'}=(\mathbf{p}'_{A},\mathbf{p}'_{B\backslash C})\in f_{1}^{-1}(f_{1}(\mathbf{p}))$ satisfies
$\mathbf{p}'_{A}\cong \mathbf{p}_{A}$. Therefore for any such $\mathbf{p'}$ there corresponds (by a rigid motion) a configuration
$\mathbf{p}''=(\mathbf{p}_{A},\mathbf{p}''_{B\backslash C})\in f_{1}^{-1}(f_{1}(\mathbf{p}))\cap\pi^{-1}(\mathbf{p}_{A})$ such that $\mathbf{p''}\cong \mathbf{p'}$. We will now show that any such $\mathbf{p''}$ is congruent to $\mathbf{p}$, thus proving $\mathbf{p'}\cong \mathbf{p}$.

We make the following observation regarding the configuration sets:
\begin{align*}
f^{-1}(f(\mathbf{p})) &\subseteq f_{1}^{-1}(f_{1}(\mathbf{p})),\\
f^{-1}(f(\mathbf{p})) &= f_{1}^{-1}(f_{1}(\mathbf{p})) \cap D_{ij},
\end{align*}
where $D_{ij}$ is the set of all edge-reduced configurations with the distance between vertices $i$ and $j$ equal to $l_{1}$. By restricting the attachment configurations to those with vertices of $G_{A}$ mapped to $\mathbf{p}_{A}$, we get:
\begin{displaymath}
\mathbf{p}\in f^{-1}(f(\mathbf{p})) \cap \pi^{-1}(\mathbf{p}_{A}) =
f_{1}^{-1}(f_{1}(\mathbf{p})) \cap D_{ij} \cap \pi^{-1}(\mathbf{p}_{A}) =
f_{1}^{-1}(f_{1}(\mathbf{p})) \cap \pi^{-1}(\mathbf{p}_{A})\ni\mathbf{p''},
\end{displaymath}
where the last equality stems from the fact that if $\mathbf{p}_{A}$ is fixed then $dist(i,j)=l_{1}$, or $\pi^{-1}(\mathbf{p}_{A})\subset D_{ij}$, and so the edge length restriction can be omitted. Therefore
$\mathbf{p}\cong \mathbf{p''}\cong \mathbf{p'}$, and since $\mathbf{p'}$ is an arbitrary configuration of $G_{1}$, this shows that $G_{1}$ is universally rigid. Proceeding by induction, we conclude that removing those edges between shared vertices that are inherited only from $G_{B}$ preserves universal rigidity.
\end{proof}
The main idea here is that universal rigidity of $G_{A}$ maintains the same pairwise distances between the shared vertices for all edge-reduced attachment configurations. Since $G$ is also universally rigid, we conclude that all edge-reduced attachments agree on pairwise distances between all vertices, and so, in particular, all configurations are congruent to $\mathbf{p}$.
\end{section}

\begin{section}{Stress matrices for framework attachments}\label{sec:EdgRedStressMat}
Let's assume we are given an attachment of two universally rigid frameworks in general position with PSD stress matrices $\Omega_{A}$ of size $v_{A}\times v_{A}$, and $\Omega_{B}$ of size $v_{B}\times v_{B}$, each of of nullity $d+1$. Here $v_{A}$ and $v_{B}$ are the number of vertices for the corresponding frameworks, and so the framework attachment will have $v=v_{A}+v_{B}-n$ vertices.

\begin{figure}
\includegraphics[scale=0.7]{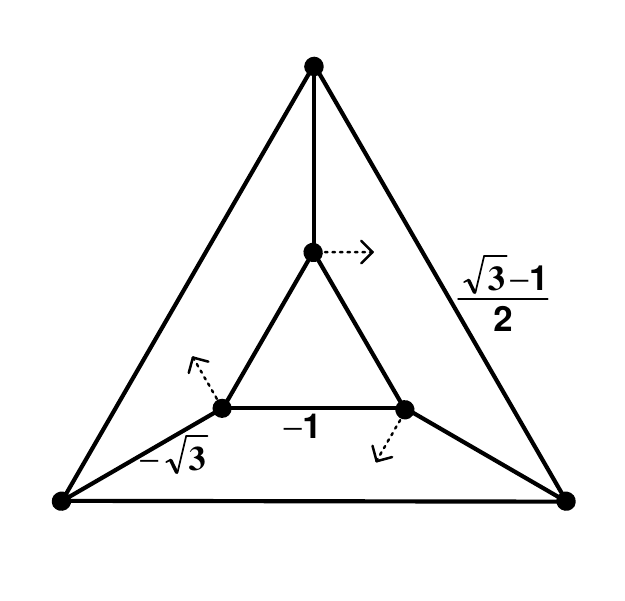}
\caption{A framework in $\mathbb{R}^2$ of a Tutte graph \cite{cnbook}. The framework is universally rigid and has a PSD stress matrix of nullity $3$, but it is not infinitesimally rigid. The arrows indicate a non-trivial infinitesimal flex. The numbers by the edges are the stresses corresponding to a PSD stress matrix of nullity 3.}
\label{fig:Tutte}
\end{figure}

We re-order, if needed, the columns and rows of $\Omega_{A}$ such that the \emph{last} $n$ columns (and rows) correspond to shared vertices. We also rearrange $\Omega_{B}$ to have the \emph{first} $n$ columns (and rows) correspond to shared vertices. Next, we extend framework $G_{A}$ by $v_{B}-n$ non-shared vertices of $G_{B}$ (leaving out new edges), thus getting an extended framework $\widetilde{G}_{A}$. Adding the new disconnected vertices is equivalent to augmenting $\Omega_{A}$ by $v_{B}-n$ rows and columns of zeros stresses, which results in an extended $v\times v$ stress matrix $\widetilde{\Omega}_{A}$. Similarly, we extend framework $G_{B}$ to $\widetilde{G}_{B}$ by adding $v_{A}-n$ non-shared vertices (and no edges) of framework $G_{A}$. The stress matrix $\Omega_{B}$ is extended to a $v\times v$ matrix $\widetilde{\Omega}_{B}$ in a similar fashion except for the way we augment the matrix by zeros, as shown below. (The difference comes from maintaining consistent labeling of vertices.)
\begin{displaymath}
\widetilde{\Omega}_{A}=
\left(
\begin{BMAT}(@,2pt,2pt){c.c}{c.c}
\begin{BMAT}(@,35pt,35pt){c}{c}
\Omega_{A}
\end{BMAT} & 0\\
0 &
\begin{BMAT}(@,25pt,25pt){c}{c}
0
\end{BMAT}
\end{BMAT}
\right)
,\quad
\widetilde{\Omega}_{B}=
\left(
\begin{BMAT}(@,2pt,2pt){c.c}{c.c}
\begin{BMAT}(@,20pt,20pt){c}{c}
0
\end{BMAT}
& 0\\
0 &
\begin{BMAT}(@,40pt,40pt){c}{c}
\Omega_{B}
\end{BMAT}
\end{BMAT}\\
\right).
\end{displaymath}
Clearly, the attachment of the two extended frameworks (joined at all vertices) is identical to the original attachment. Also, the two extended stress matrices are of the same size. We can now state the two main results of this section.
\begin{theorem}\label{thm:OmegaSum}
Given PSD stress matrices $\Omega_{A}$ and $\Omega_{B}$ of nullity $d+1$ for two frameworks in general position sharing $n\geq d+1$ vertices, a PSD stress matrix of nullity $d+1$ for the framework attachment $G$ can be obtained by summing the two matrices after extending each by appropriate number of zero columns and rows:
\begin{equation}\label{eqn:omegaPlus}
\widetilde{\Omega}=\widetilde{\Omega}_{A}+\widetilde{\Omega}_{B}=
\left(
\begin{BMAT}(@,2pt,2pt){ccc}{ccc}
\begin{BMAT}(@,20pt,20pt){c}{c}
\Omega_{A}
\end{BMAT}
& \phantom{0} & 0\\
\phantom{0} &
\begin{BMAT}(@,5pt,5pt){c}{c}
+
\end{BMAT}
& \phantom{0}\\
0 & \phantom{0} &
\begin{BMAT}(@,30pt,30pt){c}{c}
\Omega_{B}
\end{BMAT}
\addpath{(0,1,.)rruu}
\addpath{(1,0,.)uurr}
\end{BMAT}\\
\right).
\end{equation}
\end{theorem}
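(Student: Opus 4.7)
The plan is to verify that $\widetilde{\Omega}$ is a valid stress matrix for the attachment $G$, then show it is positive semidefinite, and finally compute its nullity using Lemma \ref{lma:kerSumMat} together with the general position assumption on the shared vertices.

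First I would check the four defining properties of a stress matrix from Definition \ref{dfn:defStressMat} for $\widetilde{\Omega}$, viewed against the attachment configuration $\mathbf{p}=(\mathbf{p}_1,\ldots,\mathbf{p}_v)$. Symmetry is immediate. For property (2), if $\{i,j\}\notin E = E_A\cup E_B$, then $\{i,j\}\notin E_A$ and $\{i,j\}\notin E_B$, so both $(\widetilde{\Omega}_A)_{ij}$ and $(\widetilde{\Omega}_B)_{ij}$ vanish. Properties (3) and (4) hold for $\widetilde{\Omega}_A$ with respect to the configuration of $\widetilde{G}_A$ (which agrees with $\mathbf{p}$ on $V_A$ and is padded with zero rows on $V_{B\setminus C}$), and analogously for $\widetilde{\Omega}_B$; summing gives the required identities for $\widetilde{\Omega}$ against $\mathbf{p}$.

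Positive semidefiniteness of $\widetilde{\Omega}$ is free: both summands $\widetilde{\Omega}_A$ and $\widetilde{\Omega}_B$ are PSD (padding by zero rows and columns preserves this), and the sum of PSD matrices is PSD. So only the nullity calculation remains, and this is the key step. By Lemma \ref{lma:kerSumMat},
\begin{displaymath}
\ker(\widetilde{\Omega}) = \ker(\widetilde{\Omega}_A)\cap\ker(\widetilde{\Omega}_B).
\end{displaymath}
A vector $x\in\mathbb{R}^v$ lies in $\ker(\widetilde{\Omega}_A)$ exactly when its restriction $x|_{V_A}$ lies in $\ker(\Omega_A)$, with no constraint on $x|_{V_{B\setminus C}}$. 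By Lemma \ref{lma:projker}, $\ker(\Omega_A)$ is spanned by the $d$ coordinate projections of $\mathbf{p}_A$ and the all-ones vector $\mathbf{1}_{V_A}$; an analogous description holds for $\ker(\widetilde{\Omega}_B)$.

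The lower bound $\dim\ker(\widetilde{\Omega})\geq d+1$ is immediate: the $d$ coordinate projections of $\mathbf{p}$ on $V$ together with $\mathbf{1}_V$ all restrict correctly to the respective spanning sets on $V_A$ and $V_B$, giving $d+1$ independent vectors in the intersection. For the upper bound, suppose $x$ is in the intersection. Then there exist scalars $\alpha_i^A,\beta^A$ and $\alpha_i^B,\beta^B$ with
\begin{displaymath}
x|_{V_A} = \sum_{i=1}^d \alpha_i^A (\mathbf{p}_A)_i + \beta^A\,\mathbf{1}_{V_A}, \qquad x|_{V_B} = \sum_{i=1}^d \alpha_i^B (\mathbf{p}_B)_i + \beta^B\,\mathbf{1}_{V_B}.
\end{displaymath}
The two expressions must coincide on the $n$ shared vertices $V_C$. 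The hard part is concluding $\alpha_i^A=\alpha_i^B$ and $\beta^A=\beta^B$; this is where general position with $n\geq d+1$ is used. Forming the $(d+1)\times n$ matrix whose rows are the $d$ coordinate projections restricted to $V_C$ together with $\mathbf{1}_{V_C}$, Lemma \ref{lma:p1matrix} (applied to any $d+1$ columns, which are guaranteed to exist since $n\geq d+1$) shows this matrix has rank $d+1$. Hence the $d+1$ vectors $(\mathbf{p}_C)_1,\ldots,(\mathbf{p}_C)_d,\mathbf{1}_{V_C}$ are linearly independent in $\mathbb{R}^n$, forcing the coefficients to agree. Thus $x$ is a unique linear combination of the $d+1$ attachment kernel vectors identified above, and $\dim\ker(\widetilde{\Omega})=d+1$.
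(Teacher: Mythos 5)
Your argument is correct, and it takes a genuinely different route from the paper's. Both proofs begin the same way (checking the properties of Definition \ref{dfn:defStressMat}, positive semidefiniteness, and invoking Lemma \ref{lma:kerSumMat}), but the paper computes the nullity in two stages: for exactly $d+1$ shared vertices it diagonalizes $\Omega_{A}$ and $\Omega_{B}$ (Lemma \ref{lma:OmegaDiag}), conjugates to write $\widetilde{\Omega}=\widetilde{U}_{A}(\widetilde{\Lambda}_{A}+V^{-1}\widetilde{\Lambda}_{B}V)\widetilde{U}_{A}^{T}$, and applies the dimension formula for a sum of subspaces, where invertibility of the $(d+1)\times(d+1)$ block $D$ of $S_{B}$ (via Lemma \ref{lma:p1matrix}) forces $\ker(\widetilde{\Lambda}_{A})+\ker(V^{-1}\widetilde{\Lambda}_{B}V)=\mathbb{R}^{v}$; for $n>d+1$ it then passes through an intermediate attachment with $n-d-1$ pairs of coinciding vertices, merges them by elementary row/column operations, and controls the nullity with Cauchy eigenvalue interlacing plus the fact that the configuration affinely spans $\mathbb{R}^{d}$. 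You instead describe $\ker(\widetilde{\Omega}_{A})$ and $\ker(\widetilde{\Omega}_{B})$ explicitly through Lemma \ref{lma:projker} (membership is a condition only on the restriction to $V_{A}$, respectively $V_{B}$) and intersect them directly, using general position of the $n\geq d+1$ shared vertices together with Lemma \ref{lma:p1matrix} to force the two coefficient vectors to coincide. This treats all $n\geq d+1$ uniformly, avoids both the diagonalization bookkeeping and the merging/interlacing step, and makes transparent exactly where $n\geq d+1$ and general position enter; as a proof of this theorem it is shorter and at least as rigorous as the paper's. The only minor points you should state explicitly are that the restrictions of the global coordinate projections and of the all-ones vector to $V_{A}$ and $V_{B}$ are precisely the kernel bases of Lemma \ref{lma:projker} (so your $d+1$ vectors really lie in the intersection and are linearly independent), and that general position of $G_{A}$ (equivalently $G_{B}$) is inherited by the subset $V_{C}$, so some choice of $d+1$ shared columns yields the invertible $(d+1)\times(d+1)$ submatrix that gives your $(d+1)\times n$ matrix rank $d+1$.
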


\begin{theorem}\label{thm:stressReduced}
If $G_{A}$ is also infinitesimally rigid, a positive semidefinite stress matrix of nullity $d+1$ for the edge-reduced framework attachment, in which $K$ edges inherited from $G_{B}$ were removed, is of the form:
\begin{displaymath}
\widetilde\Omega_{re}=c\widetilde\Omega_{A}+\widetilde{\Omega}_{AK}+\widetilde\Omega_{B},
\end{displaymath}
where $c$ is a large enough constant, and the entries of matrix $\widetilde{\Omega}_{AK}$ are derived from the rigidity matrix of $G_{A}$ and stress matrix $\Omega_{B}$ by solving $K$ systems of linear equations.
\end{theorem}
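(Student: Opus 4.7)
The plan is to construct $\widetilde{\Omega}_{AK}$ via $K$ applications of the infinitesimal rigidity of $G_A$, show that the prescribed cancellations make $\widetilde{\Omega}_{re}$ a stress matrix of the edge-reduced attachment, and finally establish positive semidefiniteness and nullity $d+1$ by combining the kernel calculation from Theorem \ref{thm:OmegaSum} with a perturbation argument in the spirit of Lemma \ref{lma:omegaepsilon}.

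For the construction of $\widetilde{\Omega}_{AK}$, fix a removed edge $e_k=\{i_k,j_k\}\in K$ and set $w^{(k)}:=-(\Omega_B)_{i_kj_k}$. View $e_k$ as an extra edge adjoined to $G_A$ (its endpoints already lie in $V_C\subset V_A$). Since $G_A$ is infinitesimally rigid, $\rank(df_{G_A})=v_Ad-\binom{d+1}{2}$ is maximal, so the row added to the rigidity matrix by $e_k$ cannot raise the rank; hence the stress space of $G_A\cup\{e_k\}$ has dimension exactly one greater than that of $G_A$, and the linear functional ``value on $e_k$'' is surjective. A preimage of $w^{(k)}$ gives a stress $\omega^{(k)}$ on $G_A\cup\{e_k\}$, and finding it is exactly the $k$-th of the $K$ linear systems in the theorem. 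Let $\Omega^{(k)}$ be its associated $v_A\times v_A$ stress matrix, set $\Omega_{AK}=\sum_{k=1}^{K}\Omega^{(k)}$, and extend by $v_B-n$ zero rows and columns to obtain the $v\times v$ matrix $\widetilde{\Omega}_{AK}$. Each $\Omega^{(k)}$, and hence $\widetilde{\Omega}_{AK}$, is symmetric with zero row sums and $\sum_j(\widetilde{\Omega}_{AK})_{ij}\mathbf{p}_j=0$ for every $i$; in particular the coordinate projections of $\mathbf{p}$ and $(1,\ldots,1)^T$ lie in $\ker(\widetilde{\Omega}_{AK})$, and because $\Omega_A$ has nullity exactly $d+1$ one also has $\ker(\Omega_A)\subseteq\ker(\Omega_{AK})$.

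Next I would verify $\widetilde{\Omega}_{re}=c\widetilde{\Omega}_A+\widetilde{\Omega}_{AK}+\widetilde{\Omega}_B$. At a removed edge $e_k$ the three contributions are $0$, $-(\Omega_B)_{i_kj_k}$ and $(\Omega_B)_{i_kj_k}$, which cancel; at every other non-edge of the edge-reduced attachment each summand is already zero; symmetry, row-sum zero and orthogonality against $\mathbf{p}$ are inherited termwise, so $\widetilde{\Omega}_{re}$ is a valid stress matrix of the edge-reduced attachment. For positive semidefiniteness and the nullity count, decompose the ambient $\mathbb{R}^v$ as $\mathcal{K}\oplus\mathcal{K}^\perp$ with $\mathcal{K}=\ker(\widetilde{\Omega}_A)$. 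On $\mathcal{K}$, the inclusion $\ker(\Omega_A)\subseteq\ker(\Omega_{AK})$ annihilates the $\widetilde{\Omega}_{AK}$ quadratic contribution, so for $u\in\mathcal{K}$, $u^T\widetilde{\Omega}_{re}u=u^T\widetilde{\Omega}_Bu\geq 0$, vanishing exactly on $\ker(\widetilde{\Omega}_A)\cap\ker(\widetilde{\Omega}_B)$, which by Lemma \ref{lma:kerSumMat} and Theorem \ref{thm:OmegaSum} is $(d+1)$-dimensional and spanned by the coordinate projections of $\mathbf{p}$ and the all-ones vector. On $\mathcal{K}^\perp$, the term $c\widetilde{\Omega}_A$ contributes eigenvalues bounded below by $c\lambda_{\min}^+(\widetilde{\Omega}_A)$, so by a Weyl-inequality perturbation argument once $c$ is large enough the full spectrum of $\widetilde{\Omega}_{re}$ consists of $v-(d+1)$ strictly positive eigenvalues and exactly $d+1$ zero eigenvalues.

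The principal obstacle is realizing a stress on $G_A\cup\{e_k\}$ with the prescribed value on $e_k$; this is precisely where infinitesimal rigidity of $G_A$ is indispensable, as it guarantees that each additional edge contributes a single new dimension of stress freedom, exactly enough to fix the prescribed value.
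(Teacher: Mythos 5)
Your proposal follows essentially the same route as the paper: the same rigidity-matrix counting argument (Proposition \ref{prop:stressw1}) produces the $K$ counter-balancing stresses with prescribed values $-(\Omega_B)_{i_kj_k}$, the same observation that $\ker(\Omega_A)\subseteq\ker(\Omega_{AK})$ (via Lemma \ref{lma:projker} and properties (3)--(4)) feeds the large-$c$ perturbation of Lemma \ref{lma:omegaepsilon}, and the nullity count again rests on Theorem \ref{thm:OmegaSum} together with Lemma \ref{lma:kerSumMat}. The only difference is bookkeeping in the last step: the paper first makes $c\Omega_A+\Omega_{AK}$ PSD of nullity $d+1$ and only then adds $\widetilde{\Omega}_B$ as an attachment of two PSD stress matrices, which cleanly sidesteps the cross terms of $\widetilde{\Omega}_B$ between $\ker(\widetilde{\Omega}_A)$ and its orthogonal complement that your ``on $\mathcal{K}$ / on $\mathcal{K}^\perp$'' phrasing glosses over (the quadratic form is not block-diagonal in that splitting, though your argument is easily repaired by exactly this regrouping).
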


In the rest of this section we prove these two results.
\begin{remark}\label{rmk:noUR}
While being complementary to the two main theorems from the previous section, these results do not actually require universal rigidity of the attached frameworks. From other side, a condition of infinitesimal rigidity of the framework, whose edges are preserved in the edge-reduced attachment, has been introduced for the construction of the stress matrix in the proof of Theorem \ref{thm:stressReduced}. An example in Figure~\ref{fig:Tutte} shows that infinitesimal rigidity is not automatically satisfied even when the framework is universally rigid with a PSD stress matrix of nullity $d+1$. Infinitesimal rigidity does, however, hold for all generic universally rigid frameworks.
\end{remark}

\begin{proof}[Proof of Theorem \ref{thm:OmegaSum}]
The matrix $\widetilde{\Omega}$ is a PSD matrix because both $\widetilde{\Omega}_{A}$ and $\widetilde{\Omega}_{B}$ are. Also, since both $\widetilde{\Omega}_{A}$ and $\widetilde{\Omega}_{B}$ satisfy properties (1), (3) and (4) of Definition \ref{dfn:defStressMat}, so does $\widetilde{\Omega}$. If $i,j\in V_{A}\cup V_{B}$, $i\neq j$ and $\{i,j\}\notin E_{A}\cup E_{B}$  then the ($i,j$) entry of both $\widetilde{\Omega}_{A}$ and $\widetilde{\Omega}_{B}$ is zero, and so is ($i,j$) entry of $\widetilde{\Omega}$. This verifies property (2) as well, and hence the matrix $\widetilde{\Omega}$ is a PSD stress matrix for the framework attachment.

To prove that $\widetilde{\Omega}$ is of nullity $d+1$,
we first consider the case where the number of shared vertices is exactly $d+1$.
Assume, as above, that the last $d+1$ columns(rows) of $\Omega_{A}$, and the first $d+1$ columns(rows) of $\Omega_{B}$ correspond to the shared vertices, in the same order.
We begin by diagonalizing the PSD matrices by $\Omega_{A}=U_{A}\Lambda_{A} U_{A}^T$ and
$\Omega_{B}=S_{B}\Lambda_{B} S_{B}^{-1}$, where $\Omega_{A}$ and $\Omega_{B}$ are diagonal, $U_{A}$ is unitary, and $S_{B}$ is the invertible matrix constructed as in Lemma \ref{lma:OmegaDiag}. The corresponding extended stress matrices, obtained as in the same lemma, are then readily diagonalizable as
$\widetilde{\Omega}_{A}=\widetilde{U}_{A}\widetilde{\Lambda}_{A} \widetilde{U}_{A}^T$ and
$\widetilde{\Omega}_{B}=\widetilde{S}_{B}\widetilde{\Lambda}_{B} \widetilde{S}_{B}^{-1}$, where
the $v\times v$ matrices are of the form:
\begin{displaymath}
\widetilde{U}_{A}=
\left(
\begin{BMAT}(@,2pt,2pt){c.c}{c.c}
\begin{BMAT}(@,35pt,35pt){c}{c}
U_{A}
\end{BMAT} & 0\\
0 &
\begin{BMAT}(@,25pt,25pt){c}{c}
I
\end{BMAT}
\end{BMAT}
\right)
,\quad
\widetilde{S}_{B}=
\left(
\begin{BMAT}(@,2pt,2pt){c.c}{c.c}
\begin{BMAT}(@,20pt,20pt){c}{c}
I
\end{BMAT}
& 0\\
0 &
\begin{BMAT}(@,40pt,40pt){c}{c}
S_{B}
\end{BMAT}
\end{BMAT}\\
\right),
\end{displaymath}
\begin{displaymath}
\widetilde{\Lambda}_{A}=
\left(
\begin{BMAT}(@,2pt,2pt){c.c}{c.c}
\begin{BMAT}(@,35pt,35pt){c}{c}
\Lambda_{A}
\end{BMAT} & 0\\
0 &
\begin{BMAT}(@,25pt,25pt){c}{c}
0
\end{BMAT}
\end{BMAT}
\right)
,\quad
\widetilde{\Lambda}_{B}=
\left(
\begin{BMAT}(@,2pt,2pt){c.c}{c.c}
\begin{BMAT}(@,20pt,20pt){c}{c}
0
\end{BMAT}
& 0\\
0 &
\begin{BMAT}(@,40pt,40pt){c}{c}
\Lambda_{B}
\end{BMAT}
\end{BMAT}\\
\right).
\end{displaymath}

Using the above diagonalization, we can write:
\begin{displaymath}
\widetilde{\Omega}=\widetilde{\Omega}_{A}+\widetilde{\Omega}_{B}=
\widetilde{U}_{A}(\widetilde{\Lambda}_{A}+\widetilde{U}_{A}^{T}\widetilde{\Omega}_{B} \widetilde{U}_{A})\widetilde{U}_{A}^{T}=
\widetilde{U}_{A}(\widetilde{\Lambda}_{A}+\widetilde{U}_{A}^{T}\widetilde{S}_{B}\widetilde{\Lambda}_{B} \widetilde{S}_{B}^{-1}\widetilde{U}_{A})\widetilde{U}_{A}^{T},
\end{displaymath}
or,
\begin{displaymath}
\widetilde{\Omega}=\widetilde{U}_{A}(\widetilde{\Lambda}_{A}+V^{-1}\widetilde{\Lambda}_{B} V)\widetilde{U}_{A}^{T},
\end{displaymath}
where $V=\widetilde{S}_{B}^{-1}\widetilde{U}_{A}$.
Since $\widetilde{U}_{A}$ is invertible, the matrices $(\widetilde{\Lambda}_{A}+V^{-1}\widetilde{\Lambda}_{B} V)$ and $\widetilde{\Omega}$ are of the same nullity. Also, both $\widetilde{\Lambda}_{A}$ and $V^{-1}\widetilde{\Lambda}_{B} V$ are PSD, and so, by Lemma \ref{lma:kerSumMat}, $\nullity(\widetilde{\Omega})=\dim(\ker(\widetilde{\Lambda}_{A}+V^{-1}\widetilde{\Lambda}_{B} V))=
\dim(\ker(\widetilde{\Lambda}_{A})\cap\ker(V^{-1}\widetilde{\Lambda}_{B} V))$. Using the formula for the dimension of the sum of two vector spaces \cite{artin}, we expand the last expression for the intersection, which gives:
\begin{equation}\label{eqn:dimeq}
\nullity(\widetilde{\Omega})=\dim(\ker(\widetilde{\Lambda}_{A}))+\dim(\ker(V^{-1}\widetilde{\Lambda}_{B} V))-\dim(\ker(\widetilde{\Lambda}_{A})+\ker(V^{-1}\widetilde{\Lambda}_{B} V)).
\end{equation}

We will now determine each of the three terms appearing in the above equation. Since only the last $v_{B}$ diagonal entries of $\widetilde{\Lambda}_{A}$ are zero, the basis for $\ker(\widetilde{\Lambda}_{A})$ consist of $v\times 1$ vectors $e_{i}$, where $i=\{v_{A}-d,\ldots ,v\}$ and $e_{i}=(0,\ldots ,0,1,0,\ldots ,0)^{T}$, with 1 at the $i^{th}$ coordinate. In a compact form,

\begin{equation}\label{eqn:kerLa}
\ker(\widetilde{\Lambda}_{A})=\im
\left(
\begin{BMAT}(@,2pt,2pt){c}{c.cc}
\begin{BMAT}(@,10pt,20pt){c}{c}
0
\end{BMAT}
\\
\begin{BMAT}(@,10pt,5pt){c}{c}
\phantom{0}
\end{BMAT}
\\
\begin{BMAT}(@,10pt,30pt){c}{c}
I_{v_{B}}
\end{BMAT}
\end{BMAT}\\
\right)_{v\times v_{B}}.
\end{equation}
Similarly,
\begin{equation}\label{eqn:kerLb}
\ker(\widetilde{\Lambda}_{B})=\im
\left(
\begin{BMAT}(@,2pt,2pt){c}{cc.c}
\begin{BMAT}(@,10pt,20pt){c}{c}
I_{v_{A}}
\end{BMAT}
\\
\begin{BMAT}(@,10pt,5pt){c}{c}
\phantom{0}
\end{BMAT}
\\
\begin{BMAT}(@,10pt,30pt){c}{c}
0
\end{BMAT}
\end{BMAT}\\
\right)_{v\times v_{A}},
\end{equation}
and since $\dim(\ker(V^{-1}\widetilde{\Lambda}_{B} V))=\dim(\ker(\widetilde{\Lambda}_{B}))$, we get:

\begin{align}
\label{eqn:dimkerLa}
\dim(\ker(\widetilde{\Lambda}_{A})) &= v_{B},\\
\label{eqn:dimkerVLbV}
\dim(\ker(V^{-1}\widetilde{\Lambda}_{B} V)) &= v_{A}.
\end{align}

For the third term in (\ref{eqn:dimeq}), we will find the basis of $\ker(\widetilde{\Lambda}_{A})+\ker(V^{-1}\widetilde{\Lambda}_{B} V)$. We first note that $x\in\ker(\widetilde{\Lambda}_{B})\iff V^{-1}x\in\ker(V^{-1}\widetilde{\Lambda}_{B} V)$, and since $V$ is non-singular, the basis $\mathbf{B}$ of $\ker(V^{-1}\widetilde{\Lambda}_{B} V)$ consist of the columns of the matrix-product of $V^{-1}$ and the basis-matrix of $\ker(\widetilde{\Lambda}_{B})$, given in (\ref{eqn:kerLb}):
\begin{displaymath}
\mathbf{B}=V^{-1}
\left(
\begin{BMAT}(@,2pt,2pt){c}{cc.c}
\begin{BMAT}(@,10pt,20pt){c}{c}
I_{v_{A}}
\end{BMAT}
\\
\begin{BMAT}(@,10pt,5pt){c}{c}
\phantom{0}
\end{BMAT}
\\
\begin{BMAT}(@,10pt,30pt){c}{c}
0
\end{BMAT}
\end{BMAT}\\
\right)=\widetilde{U}_{A}^{T}\widetilde{S}_{B}
\left(
\begin{BMAT}(@,2pt,2pt){c}{cc.c}
\begin{BMAT}(@,10pt,20pt){c}{c}
I_{v_{A}}
\end{BMAT}
\\
\begin{BMAT}(@,10pt,5pt){c}{c}
\phantom{0}
\end{BMAT}
\\
\begin{BMAT}(@,10pt,30pt){c}{c}
0
\end{BMAT}
\end{BMAT}\\
\right)=\widetilde{U}_{A}^{T}
\left(
\begin{BMAT}(@,2pt,2pt){c.c}{c.c}
\begin{BMAT}(@,20pt,20pt){c}{c}
I
\end{BMAT}
& 0\\
0 &
\begin{BMAT}(@,40pt,40pt){c}{c}
S_{B}
\end{BMAT}
\end{BMAT}\\
\right)
\left(
\begin{BMAT}(@,2pt,2pt){c}{cc.c}
\begin{BMAT}(@,10pt,20pt){c}{c}
I_{v_{A}}
\end{BMAT}
\\
\begin{BMAT}(@,10pt,5pt){c}{c}
\phantom{0}
\end{BMAT}
\\
\begin{BMAT}(@,10pt,30pt){c}{c}
0
\end{BMAT}
\end{BMAT}\\
\right).
\end{displaymath}
By splitting $S_{B}$ and then $\widetilde{U}_{A}^{T}$ into blocks of appropriate sizes to perform block matrix multiplication, we get:
\begin{equation}\label{eqn:Bv}
\mathbf{B} = \widetilde{U}_{A}^{T}
\left(
\begin{BMAT}(@,2pt,2pt){ccc}{ccc}
\begin{BMAT}(@,20pt,20pt){c}{c}
I
\end{BMAT}
& 0 & 0\\
0 &
\begin{BMAT}(@,5pt,5pt){c}{c}
D
\end{BMAT}
& Y\\
0 & X &
\begin{BMAT}(@,20pt,30pt){c}{c}
Z
\end{BMAT}
\addpath{(0,1,.)rrr}
\addpath{(0,2,.)rrr}
\addpath{(1,0,.)uuu}
\addpath{(2,0,.)uuu}
\end{BMAT}\\
\right)
\left(
\begin{BMAT}(@,2pt,2pt){c}{cc.c}
\begin{BMAT}(@,2pt,20pt){c}{c}
I_{v_{A}}
\end{BMAT}
\\
\begin{BMAT}(@,2pt,5pt){c}{c}
\phantom{0}
\end{BMAT}
\\
\begin{BMAT}(@,2pt,30pt){c}{c}
0
\end{BMAT}
\end{BMAT}\\
\right)
=
\left(
\begin{BMAT}(@,2pt,2pt){c.c}{c.c}
\begin{BMAT}(@,30pt,35pt){c}{c}
U_{A}^{T}
\end{BMAT} & 0\\
0 &
\begin{BMAT}(@,20pt,30pt){c}{c}
I
\end{BMAT}
\end{BMAT}
\right)
\left(
\begin{BMAT}(@,2pt,2pt){cc}{ccc}
\begin{BMAT}(@,20pt,20pt){c}{c}
I
\end{BMAT}
& 0\\
0 &
\begin{BMAT}(@,5pt,5pt){c}{c}
D
\end{BMAT}\\
0 &
\begin{BMAT}(@,5pt,30pt){c}{c}
X
\end{BMAT}
\addpath{(0,1,.)rr}
\addpath{(1,0,.)uuu}
\addpath{(0,2,.)rr}
\end{BMAT}\\
\right)=
\left(
\begin{BMAT}(@,2pt,2pt){c}{c.c}
\begin{BMAT}(@,20pt,30pt){c}{c}
U_{A}^{T}\left(
\begin{BMAT}(@,15pt,15pt){cc}{cc}
I & 0\\
0 & D
\end{BMAT}\right)
\end{BMAT}\\
\begin{BMAT}(@,30pt,30pt){cc}{c}
0 & X
\end{BMAT}
\end{BMAT}\right),
\end{equation}
where the matrix $D$ is the $(d+1)\times (d+1)$ upper left block of the matrix $S_{B}$. By construction, the first $d+1$ columns of $S_{B}$ are the $d$ coordinate
projections of $G_{B}$ and a vector $(1,\ldots ,1)^T$, and so the first $d$ columns of $D$ consist
of coordinates (of shared vertices) while the last column is the $(d+1)\times 1$ vector $(1,\ldots ,1)^T$. Since $G_{B}$ is in general position, by Lemma \ref{lma:p1matrix} the matrix $D$ is invertible.
This in turn implies that the above $v_{A}\times v_{A}$ submatrix $U_{A}^T\left( \begin{array}{cc} I&0\\ 0&D\end{array} \right)$ is invertible. Therefore, right-mulitplication of $\mathbf{B}$, given in (\ref{eqn:Bv}), by the inverse of the submatrix produces a new basis-matrix \cite{artin} for $\ker(V^{-1}\widetilde{\Lambda}_{B} V)$:

\begin{equation}\label{eqn:kerVLbV}
\ker(V^{-1}\widetilde{\Lambda}_{B} V)=\im
\left(
\left(
\begin{BMAT}(@,2pt,2pt){c}{c.c}
\begin{BMAT}(@,30pt,30pt){c}{c}
U_{A}^{T}\left(
\begin{BMAT}(@,15pt,15pt){cc}{cc}
I & 0\\
0 & D
\end{BMAT}\right)
\end{BMAT}\\
\begin{BMAT}(@,30pt,30pt){cc}{c}
0 & X
\end{BMAT}
\end{BMAT}\right)
\left(
\begin{BMAT}(@,15pt,15pt){cc}{cc}
I & 0\\
0 & D
\end{BMAT}\right)^{-1}U_{A}
\right)=\im
\left(
\begin{BMAT}(@,2pt,2pt){c}{cc.c}
\begin{BMAT}(@,10pt,20pt){c}{c}
I_{v_{A}}
\end{BMAT}
\\
\begin{BMAT}(@,10pt,5pt){c}{c}
\phantom{0}
\end{BMAT}
\\
\begin{BMAT}(@,10pt,30pt){c}{c}
Q
\end{BMAT}
\end{BMAT}\\
\right)_{v\times v_{A}}
\end{equation}
for some matrix $Q$. Combining (\ref{eqn:kerLa}) and (\ref{eqn:kerVLbV}) now gives:
\begin{displaymath}
\ker(\widetilde{\Lambda}_{A})+\ker(V^{-1}\widetilde{\Lambda}_{B} V)=\im
\left(
\begin{BMAT}(@,2pt,2pt){c}{c.cc}
\begin{BMAT}(@,10pt,20pt){c}{c}
0
\end{BMAT}
\\
\begin{BMAT}(@,10pt,5pt){c}{c}
\phantom{0}
\end{BMAT}
\\
\begin{BMAT}(@,10pt,30pt){c}{c}
I_{v_{B}}
\end{BMAT}
\end{BMAT}\\
\right)
+\im
\left(
\begin{BMAT}(@,2pt,2pt){c}{cc.c}
\begin{BMAT}(@,10pt,20pt){c}{c}
I_{v_{A}}
\end{BMAT}
\\
\begin{BMAT}(@,10pt,5pt){c}{c}
\phantom{0}
\end{BMAT}
\\
\begin{BMAT}(@,10pt,30pt){c}{c}
Q
\end{BMAT}
\end{BMAT}\\
\right)=\im(I_{v})=\mathbb{R}^{v},
\end{displaymath}
and hence
\begin{equation}\label{eqn:dimsumker}
\dim(\ker(\widetilde{\Lambda}_{A})+\ker(V^{-1}\widetilde{\Lambda}_{B} V))=v=v_{A}+v_{B}-d-1.
\end{equation}
By substituting (\ref{eqn:dimkerLa}), (\ref{eqn:dimkerVLbV}) and (\ref{eqn:dimsumker}) into the dimension formula (\ref{eqn:dimeq}), we finally get:
\begin{displaymath}
\nullity(\widetilde{\Omega})=v_{B}+v_{A}-(v_{A}+v_{B}-d-1)=d+1,
\end{displaymath}
which completes the proof for the case where the number of shared vertices is $d+1$.

In case the number of shared vertices is $n>d+1$, we consider an ``intermediate'' framework attachment which has $d+1$ shared vertices and $n-d-1$ pairs of vertices that are coinciding in space. The attachment with $n$ shared vertices is then constructed by merging each pair of coinciding vertices into one (shared) vertex. Such merging in turn corresponds to certain operations on the stress matrix of the intermediate attachment, which we will now describe in detail.

The PSD stress matrix for the intermediate attachment is similar to (\ref{eqn:omegaPlus}):

\begin{displaymath}
\begin{BMAT}(@,2pt,2pt){cc}{cc}
\phantom{0} \phantom{0} \phantom{0} \phantom{0}
\begin{BMAT}(@,2pt,2pt){ccccc}{c}
\phantom{0} & c_{1} & \phantom{0} & c_{2} & \phantom{0}
\end{BMAT}
& \phantom{0} \\
\widetilde{\Omega}=\left(
\begin{BMAT}(@,2pt,2pt){ccccc}{ccccc}
\Omega_{A} & | & \phantom{0} & | & 0\\
- & + & - & + & -\\
\phantom{0} & | & + & | & \phantom{0}\\
- & + & - & + & -\\
0 & | & \phantom{0} & | & \Omega_{B}
\addpath{(2,0,.)uuurrr}
\addpath{(0,2,.)rrruuu}
\end{BMAT}\right)
&
\begin{BMAT}(@,2pt,2pt){c}{ccccc}
\phantom{0} \\
r_{1} \\
\phantom{0} \\
r_{2} \\
\phantom{0} \\
\end{BMAT}\\
\end{BMAT},
\end{displaymath}
where a pair of columns $c_{1}$, $c_{2}$ (and rows $r_{1}$, $r_{2}$) correspond to one of the $n-d-1$ pairs of coinciding vertices that are to be merged.
The matrix $\widetilde{\Omega}$ is of nullity $d+1$ (which, as in the proof above, comes from $G_{B}$ being in general position). We will now obtain a stress matrix for the attachment with the two coinciding vertices merged, and show that this matrix is positive semidefinite of nullity $d+1$.

We first add column $c_{2}$ to column $c_{1}$, and row $r_{2}$ to row $r_{1}$. Such operation results in a matrix $\widetilde{\Omega}'=R\widetilde{\Omega} R^T$ where $R$ is an elementary matrix, and so $\widetilde{\Omega}'$ is a PSD matrix of nullity $d+1$. Adding the columns and rows corresponds to merging the two vertices. We now exchange the column $c_{2}$ (row $r_{2}$) with the last column (row), resulting in another, positive semidefinite matrix $\widetilde{\Omega}''$ of nullity $d+1$. We can view $\widetilde{\Omega}''$ as a matrix $\widetilde{\Omega}_{m}$ bordered with a vector $y$ and a real number $a$:

\begin{displaymath}
\widetilde{\Omega}''=
\left(
\begin{BMAT}(@,2pt,2pt){c.c}{c.c}
\begin{BMAT}(@,50pt,50pt){c}{c}
\widetilde{\Omega}_{m}
\end{BMAT} & y\\
y^T &
\begin{BMAT}(@,10pt,10pt){c}{c}
a
\end{BMAT}
\end{BMAT}
\right).
\end{displaymath}
By theorem 4.3.8 in \cite{hj}, the eigenvalues $\mu_{i}$ of $\widetilde{\Omega}_{m}$ are related to the eigenvalues $\lambda_{i}$ of $\widetilde{\Omega}''$ by the following inequality:
\begin{displaymath}
\lambda_{1}\leq\mu_{1}\leq\lambda_{2}\leq\mu_{2}\leq\cdots\leq\lambda_{v-1}\leq\mu_{v-1}\leq\lambda_{v}.
\end{displaymath}
Since $\widetilde{\Omega}''$ is of nullity $d+1$, we have $\lambda_{1}=\cdots =\lambda_{d+1}=0$, and hence $\mu_{1}=\cdots =\mu_{d}=0$, $0\leq\mu_{d+1}\leq\lambda_{d+2}$,
and $\mu_{d+2},\ldots ,\mu_{v-1}>0$. This implies that $\widetilde{\Omega}_{m}$ is of nullity $d$ or $d+1$. However, $\widetilde{\Omega}_{m}$ is a stress matrix for a framework attachment (with the pair of coinciding vertices merged), and as such its minimal nullity is $d+1$. This comes from the fact that the configuration corresponding to the attachment of two frameworks in general position with at least $d+1$ vertices each does not lie in any affine subspace of $\mathbb{R}^d$ \cite{gt}.
Repeating the above operations on the $n-d-2$ rows/columns of $\widetilde{\Omega}_{m}$ corresponding to the rest of coinciding vertices, we then obtain a PSD stress matrix of nullity $d+1$ for the attachment with $n$ shared vertices. This completes the proof of Theorem \ref{thm:OmegaSum}.
\end{proof}

Before formally proving Theorem \ref{thm:stressReduced}, we first outline the proof for the case where only \emph{one} edge (say, $\{i,j\}$) of $G_{B}$ is removed from the attachment. We start by having two frameworks and the edge $\{i,j\}$ still connected in $G_{B}$. From the PSD stress matrix $\Omega_{B}$, we obtain a stress $w_{1}$ on the edge $\{i,j\}$. We then \emph{add} the edge $\{i,j\}$ to the same pair of vertices in $G_{A}$, thus obtaining edge-extended framework $\overline{G}_{A}$, and use Proposition \ref{prop:stressw1} below to derive a stress vector $\overline{w}$ such that the stress on the added edge is $-w_{1}$ (as in Figure~\ref{fig:figRedEdge}). With the help of Lemma \ref{lma:omegaepsilon}, we then obtain a PSD stress matrix of nullity $d+1$ bearing the same stress on the added edge. Attaching the two frameworks, by Theorem \ref{thm:OmegaSum}, will correspond to summing the corresponding extended PSD stress matrices, resulting in $w_{1}+(-w_{1})=0$ stress on the edge $\{i,j\}$ of the framework attachment. By Theorem \ref{thm:EdgeReducedMain}, we can remove this edge from the attachment. Since the entry in the stress matrix corresponding to $\{i,j\}$ is 0, after removing this edge the same stress matrix will satisfy equilibrium equation \ref{eqn:stresseq}, and hence it is the stress matrix for the edge-reduced attachment as well. In the proof of Theorem \ref{thm:stressReduced} below, we will follow this outline for a more general case where $K$ edges are removed from the attachment. But first we establish the following preliminary result guaranteeing a non-zero stress on the added edge.
\begin{figure}
\subfloat[]{\includegraphics[scale=1]{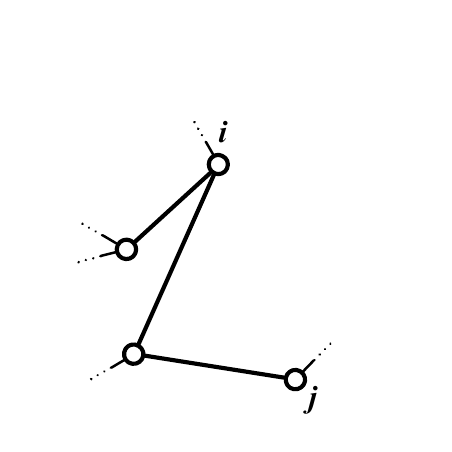}\label{fig:figGac}}\hspace{1in}
\subfloat[]{\includegraphics[scale=1]{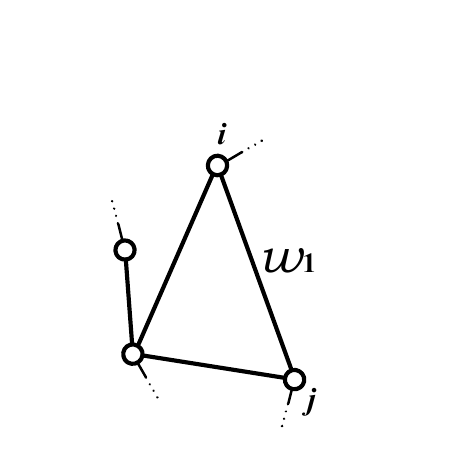}\label{fig:figGbc}}\hspace{1in}
\subfloat[]{\includegraphics[scale=1]{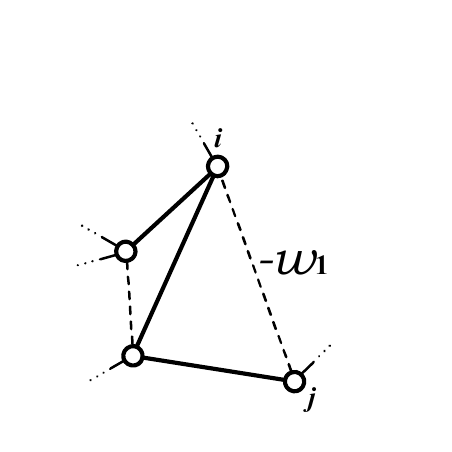}\label{fig:figGabc}}

\caption{Edges between the shared vertices in $G_{A}$ (\ref{fig:figGac}), $G_{B}$ (\ref{fig:figGbc}) and $\overline{G}_{A}$ (\ref{fig:figGabc}). The stresses on the added edges in $\overline{G}_{A}$ counter-balance the stresses on the same edges in $G_{B}$.}

\label{fig:figRedEdge}
\end{figure}

\begin{proposition}\label{prop:stressw1}
For a framework $\overline{G}_{A}$, obtained from infinitesimally rigid framework $G_{A}$ by adding edge $\{i,j\}$, there exists a stress vector $\overline{w}$ such that the stress on the added edge is equal to $-w_{1}$.
\end{proposition}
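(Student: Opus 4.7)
The plan is to exploit the one-dimensional increase in the space of equilibrium stresses that occurs when the edge $\{i,j\}$ is adjoined to the infinitesimally rigid framework $G_{A}$. Let $\overline{df}$ denote the rigidity matrix of $\overline{G}_{A}$; it is obtained from the rigidity matrix $df_{A}$ of $G_{A}$ by appending one row corresponding to $\{i,j\}$. By Proposition \ref{prop:stressdf}, the space of stresses on $\overline{G}_{A}$ is $\ker(\overline{df}^{T})$, and my goal is to locate an element of this kernel whose $\{i,j\}$-coordinate is the prescribed value $-w_{1}$.

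The key step is a rank computation. Infinitesimal rigidity of $G_{A}$ means $\rank(df_{A}) = v_{A}d - \binom{d+1}{2}$; equivalently, the infinitesimal flex space of $G_{A}$ equals the trivial flex space. Because any infinitesimal flex of $\overline{G}_{A}$ is in particular an infinitesimal flex of $G_{A}$, and because trivial flexes always lie in the kernel of the rigidity matrix, the flex space of $\overline{G}_{A}$ coincides with the trivial flex space as well. Consequently $\rank(\overline{df}) = \rank(df_{A})$, so the appended row is linearly dependent on the rows of $df_{A}$, and the rank--nullity theorem yields
\[
\dim\ker(\overline{df}^{T}) = \dim\ker(df_{A}^{T}) + 1.
\]

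To conclude, I would examine the linear functional $\phi : \ker(\overline{df}^{T}) \to \mathbb{R}$ that reads off the $\{i,j\}$-coordinate of a stress. Any stress in $\ker\phi$ is the extension by zero of a stress on $G_{A}$, so $\dim\ker\phi = \dim\ker(df_{A}^{T})$. Combined with the previous dimension equation, $\phi$ must be surjective, and rescaling a preimage of $-w_{1}$ produces the desired $\overline{w}$. The main subtlety is the rank step: infinitesimal rigidity is precisely what forces the new edge to be stress-redundant rather than stress-independent, and hence guarantees that the stress space genuinely grows by one dimension rather than remaining unchanged.
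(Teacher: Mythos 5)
Your proposal is correct and follows essentially the same route as the paper: compare $\rank(\overline{df})$ with $\rank(df_A)$ to conclude $\dim\ker(\overline{df}^{T})=\dim\ker(df_A^{T})+1$, then observe that the zero-extended stresses of $G_A$ cannot fill this larger kernel, so some stress has a nonzero coordinate on $\{i,j\}$ which can be scaled to $-w_{1}$. The only difference is cosmetic: you justify the equality of ranks via the flex-space argument (the flex space of $\overline{G}_{A}$ is squeezed between the trivial flexes and the flex space of $G_{A}$), a point the paper asserts more briefly as ``adding an edge preserves infinitesimal rigidity,'' and you package the last step as surjectivity of the coordinate functional rather than exhibiting the solution of $df^{T}w=\rho w_{1}$.
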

\begin{proof}
For the unmodified $G_{A}$ we have the rigidity matrix $df$, defined by (\ref{eqn:dfdef}). By proposition \ref{prop:stressdf}, a stress vector is an element of a kernel of rigidity matrix transposed. Since $G_{A}$ is infinitesimally rigid,  $\rank (df)=v_{A}d-\binom{d+1}{2}$. With $e_{A}$ being the number of edges (and the number of rows in $df$), $\nullity(df^{T})=e_{A}-\rank(df)$. Adding edge $\{i,j\}$ to $G_{A}$ corresponds to adding a row to $df$ (or, a column to $df^{T}$), resulting in a new rigidity matrix $\overline{df}$. Since adding an edge preserves infinitesimal rigidity, $\rank (\overline{df})=v_{A}d-\binom{d+1}{2}=\rank (df)$, and so $\nullity(\overline{df}^{T})=e_{A}+1-\rank(df)=\nullity(df^{T})+1$. In other words, adding an edge to an infinitesimally rigid framework increases the dimension of the space of stresses by 1.
We will now show that in the new space of stresses, there exists a vector with non-zero stress component corresponding to the added edge $\{i,j\}$. We first note that columns of $df^{T}$ and components of a stress vector $w\in\ker(df^{T}))$ are indexed by existing edges of $G_{A}$.
We append a new edge related column (in $df^{T}$) and a component (in $w$), and from the following equalities:
\begin{displaymath}
df^{T}w=
\left(
\begin{BMAT}(@,2pt,2pt){c.c}{c}
df^{T} & \rho
\end{BMAT}
\right)
\left(
\begin{BMAT}(@,2pt,2pt){c}{c.c}
w\\
0
\end{BMAT}\\
\right)
=\overline{df}^{T}\cdot
\left(
\begin{BMAT}(@,2pt,2pt){c}{cc}
w\\
0
\end{BMAT}\\
\right),
\end{displaymath}
where $\rho = (0,\ldots \mathbf{p}_{i}-\mathbf{p}_{j},\ldots ,\mathbf{p}_{j}-\mathbf{p}_{i},\ldots ,0)^{T}$ corresponds to the added edge $\{i,j\}$ (see Figure \ref{fig:figGabc}), we have:
\begin{displaymath}
w\in\ker(df^{T}) \iff
\left(
\begin{BMAT}(@,2pt,2pt){c}{cc}
w\\
0
\end{BMAT}\\
\right)
\in\ker(\overline{df}^{T}).
\end{displaymath}
Since $\dim(\ker(\overline{df}^{T}))>\dim(\ker(df^{T}))$, vectors $(w,0)^{T}$ do not span all of $\ker(\overline{df}^{T})$, and so there exists a stress vector $\overline{w}=(w,\psi)^{T}\in \ker(\overline{df}^{T})$ such that $\psi\neq 0$. By scaling, we can set $\psi=-w_{1}$, while the rest of the components (namely, $w$) of vector $\overline{w}$ can be found by solving the equation:
\begin{displaymath}
\overline{df}^{T}\overline{w}=
\left(
\begin{BMAT}(@,2pt,2pt){cc}{c}
df^{T} & \rho
\end{BMAT}
\right)
\left(
\begin{BMAT}(@,2pt,2pt){c}{cc}
w\\
-w_{1}
\end{BMAT}\\
\right)=0,
\end{displaymath}
or, in a compact form,
\begin{equation}\label{eqn:dfw}
df^{T}w=\rho w_{1}.
\end{equation}

\end{proof}

\begin{proof}[Proof of Theorem \ref{thm:stressReduced}]
Following the same construction as in the proof outline above, we first add one of the $K$ edges to $G_{A}$. From \ref{eqn:dfw}, we then compute a stress vector for the extended framework, whose last entry, corresponding to the stress on the added edge, equals to the sign-inverted stress on the same edge in $G_{B}$. Repeating for all $K$ edges, we obtain $K$ stress vectors $\overline{w}_{k}$ by appropriately choosing $w_{1}$ from $\Omega_{B}$ and solving \ref{eqn:dfw} for each edge. From each $\overline{w}_{k}$, in turn, we form a stress matrix such that the $(i_{k},j_{k})$ entry, corresponding to the added edge, is equal to the last (non-zero) entry of $\overline{w}_{k}$. By summing all $K$ stress matrices, we get a stress matrix $\overline{\Omega}_{AK}$ for the framework $\overline{G}_{A}$ constructed by adding \emph{all} $K$ edges to $G_{A}$. The elements of $\overline{\Omega}_{AK}$ corresponding to the added edges will have the same stresses, but with the opposite sign, as the stresses on the same edges in $G_{B}$. However, this matrix in general may neither be PSD nor have nullity $d+1$. From other side, the given matrix $\Omega_{A}$, which is a stress matrix for both $G_{A}$ and $\overline{G}_{A}$, is a PSD matrix with nullity $d+1$, with the entries corresponding to the $K$ added edges equal to 0. Moreover, since $\Omega_{A}$ and $\overline{\Omega}_{AK}$ both satisfy properties (3) and (4) of Definition \ref{dfn:defStressMat}, while $\Omega_{A}$ also satisfies Lemma \ref{lma:projker}, we have $\ker(\Omega_{A})\subseteq\ker(\overline{\Omega}_{AK})$. Therefore by Lemma \ref{lma:omegaepsilon}, with a suitable choice of $c>0$ (such as $c=\frac{2\|\overline{\Omega}_{AK} \|}{\lambda_{[v_{A}-d-1]}(\Omega_{A})}$), the matrix $c\Omega_{A}+\overline{\Omega}_{AK}$ is positive semidefinite with nullity $d+1$, and has the same stresses on the added edges as $\overline{\Omega}_{AK}$.

Finally, we attach the frameworks $\overline{G}_{A}$ and $G_{B}$.
The $K$ added edges in $\overline{G}_{A}$ were already in $G_{B}$, and so this attachment is identical to the regular framework attachment of $G_{A}$ and $G_{B}$. In terms of matrices, however, this attachment corresponds to adding extended $\Omega_{B}$ to the extension of $c\Omega_{A}+\overline{\Omega}_{AK}$, resulting, as in the proof of Theorem \ref{thm:OmegaMain}, in a PSD stress matrix of nullity $d+1$:
\begin{displaymath}
\widetilde\Omega_{re}=c\widetilde\Omega_{A}+\widetilde{\Omega}_{AK}+\widetilde\Omega_{B}.
\end{displaymath}
Its entries, corresponding to the $K$ edges, are equal to $0$, therefore $\widetilde\Omega_{re}$ is the desired stress matrix for the attachment with the $K$ edges removed.
\end{proof}
\end{section}


\begin{section}{Generation of new universally rigid frameworks}\label{sec:Apps}

In \cite{aklat}, Alfakih \emph{et al.} proved that $(d+1)$-lateration framework in general position in $\mathbb{R}^d$ is not only universally rigid but also has a PSD stress matrix of nullity $d+1$.
In this section we will show that edge-reduced framework attachment is a generalization of $(d+1)$-lateration, which implies that a corresponding PSD stress matrix can be constructed as in Theorem \ref{thm:stressReduced}. We will also show that, in general, the edge-reduced attachment can be used to generate universally rigid graphs.
\begin{definition}\label{dfn:d1lateration}
A graph $G=(V,E)$ is a $(d+1)$-lateration graph if:
\begin{enumerate}
\item verices $\{1,\ldots ,d+1\}$ form a complete graph
\item each vertex $i>d+1$ connects to $d+1$ vertices from the set $\{1,\ldots ,i-1\}$.
\end{enumerate}
\end{definition}
The definition above suggests that $(d+1)$-lateration graphs, after appropriately labeling the vertices, can be generated inductively. Starting with a complete graph $G$, at each induction step we expand $G$ by adding a new vertex and connecting it to some $d+1$ existing vertices. The following proposition shows that the same can be accomplished using edge-reduced graph attachment.
\begin{proposition}\label{prop:attachforlat}
Any $(d+1)$-lateration graph can be generated by a sequence of edge-reduced attachments.
\end{proposition}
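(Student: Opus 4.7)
The plan is to induct on the number of vertices of the $(d+1)$-lateration graph, realizing each lateration step as an edge-reduced attachment with a complete graph $K_{d+2}$. The base case is the initial complete graph $K_{d+1}$ on vertices $\{1,\ldots,d+1\}$, which we take as the starting graph of our sequence. For the inductive step, suppose $G_{i-1}$ is the $(d+1)$-lateration graph built on vertices $\{1,\ldots,i-1\}$, and that vertex $i$ is prescribed to connect to $d+1$ vertices $v_{1},\ldots,v_{d+1}\in\{1,\ldots,i-1\}$. I would set $G_{A}=G_{i-1}$ and take $G_{B}$ to be the complete graph $K_{d+2}$ on the vertex set $\{v_{1},\ldots,v_{d+1},i\}$, with the shared vertex set $V_{A}\cap V_{B}=\{v_{1},\ldots,v_{d+1}\}$, which is a proper subset of each (it misses vertex $i$ in $V_{B}$, and misses all other vertices of $G_{i-1}$ in $V_{A}$).

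The next step is to identify the edge-reduced attachment of $G_{A}$ and $G_{B}$ with $G_{i}$. The attachment $G$ has vertex set $V_{A}\cup\{i\}$ and edge set $E_{A}\cup E_{B}$. Since vertex $i$ is not a shared vertex, the $d+1$ edges $\{i,v_{k}\}$ contributed by $K_{d+2}$ are preserved in the edge-reduced attachment $G_{re}$. The only other edges contributed by $G_{B}$ are those between the shared vertices $v_{1},\ldots,v_{d+1}$; these are precisely the edges between shared vertices, and by Definition \ref{dfn:EdgeReducedDef} any such edge that is present only in $G_{B}$ is removed in $G_{re}$. Edges of $G_{A}$ among the shared vertices are retained (they are inherited from $G_{A}$, and by completeness of $K_{d+2}$ they are also in $G_{B}$). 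Hence $E(G_{re})=E_{A}\cup\{\{i,v_{k}\}: k=1,\ldots,d+1\}$, which is exactly the edge set of $G_{i}$.

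Iterating this construction for $i=d+2,d+3,\ldots,v$ expresses the $(d+1)$-lateration graph as a finite sequence of edge-reduced attachments starting from $K_{d+1}$, proving the proposition. The only subtle point — and the step I would be most careful with — is checking that the edge reduction behaves as claimed on edges between shared vertices: the definition removes those inherited from \emph{only one} framework, and the fact that $G_{B}$ is complete on the shared set ensures that no edge of $G_{A}$ between shared vertices is accidentally deleted, while the asymmetry correctly erases any $K_{d+2}$-edges between shared vertices that were absent from $G_{A}$. With this bookkeeping verified, the inductive step is immediate, and the proof concludes.
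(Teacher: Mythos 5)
Your proof is correct and follows essentially the same route as the paper: at each lateration step, attach the current graph to a complete graph $K_{d+2}$ on the new vertex together with its $d+1$ prescribed neighbors, sharing those $d+1$ vertices, and then edge-reduce so that only the edges incident to the new vertex survive from $G_{B}$. Your extra bookkeeping confirming that completeness of $K_{d+2}$ protects the $G_{A}$-edges among shared vertices is a correct elaboration of the same argument.
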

\begin{proof}
Assume at $k^{th}$ step we introduce a new vertex $v_{k}$ in graph $G$, and want to connect it to $d+1$
vertices $v_{i_{1}},\ldots ,v_{i_{d+1}}$ that are already in $G$. This can be done by first attaching graph G to a complete
graph consisting of vertices $v_{i_{1}},\ldots ,v_{i_{d+1}},v_{k}$ such that $v_{i_{1}},\ldots ,v_{i_{d+1}}$ are shared, and then removing
the edges between the shared vertices that were not in $G$.
\end{proof}

To obtain a PSD stress matrix of nullity $d+1$ for a $(d+1)$-lateration framework in general position we can then use Theorem \ref{thm:stressReduced}, after checking the condition that any such framework is infinitesimally rigid. For this we use the following re-phrased result by Connelly (\cite{cnbook}, Proposition 2.21).
\begin{proposition}[Connelly \cite{cnbook}]\label{prop:inf221}
Suppose $G(\mathbf{p})$ is an infinitesimally rigid framework in general position in $\mathbb{R}^d$. Extend the framework by adding a new vertex $v_{k}$ and connecting it to $d$ existing vertices $v_{i_{1}},\ldots ,v_{i_{d}}$ such that $\mathbf{p}_{k}$ is not in the affine span of $\mathbf{p}_{i_{1}},\ldots ,\mathbf{p}_{i_{d}}$. Then the extended framework is infinitesimally rigid.
\end{proposition}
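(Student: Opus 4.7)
The plan is to take any infinitesimal flex of the extended framework and show it is trivial. Write $\overline{\mathbf{p}} = (\mathbf{p}, \mathbf{p}_k)$ and let $\overline{\mathbf{q}} = (\mathbf{q}_1, \ldots, \mathbf{q}_v, \mathbf{q}_k)$ be an arbitrary infinitesimal flex of the extended framework $\overline{G}(\overline{\mathbf{p}})$. Its restriction $(\mathbf{q}_1, \ldots, \mathbf{q}_v)$ automatically satisfies the edge equations for every edge of $G$ (a subset of the edges of $\overline{G}$), hence it is an infinitesimal flex of $G(\mathbf{p})$; by hypothesis it must then be trivial, of the form $\mathbf{q}_i = A\mathbf{p}_i + t$ for some skew-symmetric $A$ and translation $t \in \mathbb{R}^d$.

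I would then subtract the natural trivial flex $\overline{\mathbf{q}}' = (A\mathbf{p}_1 + t, \ldots, A\mathbf{p}_v + t, A\mathbf{p}_k + t)$ of $\overline{G}(\overline{\mathbf{p}})$ from $\overline{\mathbf{q}}$. The difference $\overline{\mathbf{q}}''$ remains an infinitesimal flex of $\overline{G}$, and by construction its first $v$ components vanish, leaving only a residual vector $\mathbf{q}''_k$ at the new vertex. The $d$ new edge equations for $\{k, i_l\}$, $l = 1,\ldots, d$, then collapse to the $d$ orthogonality conditions $(\mathbf{p}_k - \mathbf{p}_{i_l}) \cdot \mathbf{q}''_k = 0$.

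The crux of the argument is the observation that these $d$ vectors $\mathbf{p}_k - \mathbf{p}_{i_l}$ span $\mathbb{R}^d$. By general position of $G(\mathbf{p})$, the $d$ points $\mathbf{p}_{i_1}, \ldots, \mathbf{p}_{i_d}$ are affinely independent, so the $d-1$ vectors $\mathbf{p}_{i_l} - \mathbf{p}_{i_1}$ ($l = 2, \ldots, d$) span a $(d-1)$-dimensional subspace $U$; since $\mathbf{p}_k \notin \mathbf{p}_{i_1} + U = \mathrm{aff}(\mathbf{p}_{i_1}, \ldots, \mathbf{p}_{i_d})$, the vector $\mathbf{p}_k - \mathbf{p}_{i_1}$ lies outside $U$, and adjoining it to the previous $d-1$ vectors yields a basis of $\mathbb{R}^d$ contained in the span of $\{\mathbf{p}_k - \mathbf{p}_{i_l}\}_{l=1}^d$. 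Thus $\mathbf{q}''_k = 0$, so $\overline{\mathbf{q}} = \overline{\mathbf{q}}'$ is trivial, establishing infinitesimal rigidity of the extended framework.

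No serious obstacle is anticipated: the entire argument reduces to the elementary linear-algebraic fact of the previous paragraph, together with the standard observation that restricting an infinitesimal flex to a subset of vertices and edges again yields a flex. The hypotheses on general position of $G(\mathbf{p})$ and on $\mathbf{p}_k$ not lying in the affine span of $\mathbf{p}_{i_1}, \ldots, \mathbf{p}_{i_d}$ are each used exactly once, precisely where the spanning property of the difference vectors is needed.
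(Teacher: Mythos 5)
Your argument is correct: restricting a flex of the extended framework to $G(\mathbf{p})$, using infinitesimal rigidity to write it as $\mathbf{q}_i=A\mathbf{p}_i+t$, subtracting that trivial flex, and then killing the residual $\mathbf{q}''_k$ because the $d$ vectors $\mathbf{p}_k-\mathbf{p}_{i_l}$ span $\mathbb{R}^d$ (affine independence of $\mathbf{p}_{i_1},\ldots,\mathbf{p}_{i_d}$ plus $\mathbf{p}_k$ outside their affine span) is exactly the standard proof; the paper itself gives no proof and simply cites Connelly's Proposition 2.21, which proceeds the same way. The only point worth flagging is the implicit assumption that $G(\mathbf{p})$ has at least $d+1$ vertices, so that general position actually yields the affine independence of the $d$ chosen points (and so that flex-triviality coincides with the rank characterization of infinitesimal rigidity); this holds in the paper's application, where the construction starts from a complete graph on $d+1$ vertices.
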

From here it immediately follows:
\begin{corollary}\label{cor:d1latinf}
Any $(d+1)$-lateration framework in general position is infinitesimally rigid.
\end{corollary}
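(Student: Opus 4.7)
The plan is to prove the corollary by induction on the number of vertices, building up the $(d+1)$-lateration framework exactly as its inductive definition prescribes, and invoking Proposition \ref{prop:inf221} at each step. By Definition \ref{dfn:d1lateration}, we first have the complete graph on $\{1,\ldots,d+1\}$, and then each new vertex $i > d+1$ is connected to $d+1$ previously-added vertices. I will verify infinitesimal rigidity of the base case and then propagate it through the sequence of extensions.

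For the base case, I would observe that a complete framework on $d+1$ vertices in general position is a simplex: the $d+1$ points are affinely independent, so every infinitesimal flex preserving all $\binom{d+1}{2}$ pairwise distances is forced to be a trivial rigid motion. Concretely, the rigidity matrix has $\binom{d+1}{2}$ rows, $(d+1)d$ columns, and attains rank $(d+1)d - \binom{d+1}{2}$ since the affine independence of the $\mathbf{p}_i$ prevents any nontrivial stress or nontrivial non-trivial flex.

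For the inductive step, suppose the subframework on vertices $\{1,\ldots,i-1\}$ is infinitesimally rigid and in general position in $\mathbb{R}^d$ (general position of the whole framework restricts to general position of any subset). Vertex $i$ is attached to $d+1$ previous vertices $v_{i_1},\ldots,v_{i_{d+1}}$. Pick any $d$ of them, say $v_{i_1},\ldots,v_{i_d}$. Because the framework is in general position, the $d+1$ points $\mathbf{p}_{i_1},\ldots,\mathbf{p}_{i_d},\mathbf{p}_i$ are affinely independent, so $\mathbf{p}_i$ does not lie in the affine span of $\mathbf{p}_{i_1},\ldots,\mathbf{p}_{i_d}$. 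Proposition \ref{prop:inf221} then gives infinitesimal rigidity of the extension by these $d$ edges. The remaining edge to $v_{i_{d+1}}$ only adds a row to the rigidity matrix and cannot decrease its rank, so infinitesimal rigidity is preserved after including all $d+1$ connecting edges.

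There is no real obstacle here; the only subtlety is the mismatch between the $d$ edges in Proposition \ref{prop:inf221} and the $d+1$ edges used in $(d+1)$-lateration, which is resolved by the monotonicity observation that adding edges preserves infinitesimal rigidity. Iterating through all vertices $i = d+2, d+3, \ldots, v$ completes the induction and yields the corollary.
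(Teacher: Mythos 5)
Your proof is correct and follows essentially the same route as the paper: induct along the lateration order, note the complete framework on $d+1$ affinely independent points is infinitesimally rigid, apply Proposition \ref{prop:inf221} using $d$ of the $d+1$ attachment vertices (general position supplying the affine-span hypothesis), and observe that the extra $(d+1)$-st edge only adds a row to the rigidity matrix and so preserves infinitesimal rigidity. This matches the argument the paper intends when it says the corollary follows immediately from Proposition \ref{prop:inf221}.
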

With this result we can apply Theorem \ref{thm:stressReduced} and compute a PSD stress matrix of nullity $d+1$ for a $(d+1)$-lateration framework in general position. Since the framework generation involves attachment of a complete graph (containing a new vertex) and then removal of some $K$ edges between the shared vertices, each step would require computing a PSD stress matrix of nullity $d+1$ for the complete framework, and then solving $K$ systems of linear equations as described in the theorem.

The edge-reduced attachment can also be applied for generation of new universally rigid graphs from smaller and simpler graphs whose frameworks in general position are always universally rigid. An example of such universally rigid graph, which can not be generated by $(d+1)$-lateration, is shown in Figure \ref{fig:example3lat}. It turns out that all frameworks of the graphs generated by the edge-reduced attachment have PSD stress matrices of nullity $d+1$, as the following theorem shows.
\begin{figure}
\includegraphics[scale=0.7]{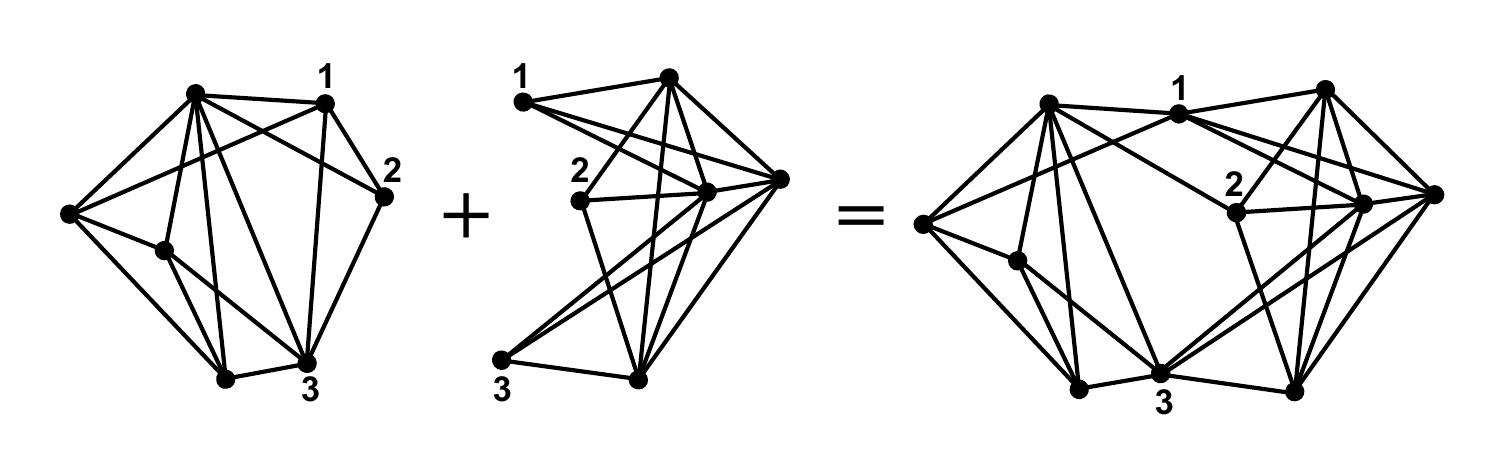}
\caption{Edge-reduced graph attachment of two universally rigid graphs. Each of the two graphs can be generated by $3$-lateration, which is not the case for their edge-reduced attachment.}
\label{fig:example3lat}
\end{figure}
\begin{theorem}\label{thm:URgraph}
Assume we have two frameworks in general position corresponding to two universally rigid graphs. If both frameworks have PSD stress matrices of nullity $d+1$, their edge-reduced attachment also has one.
\end{theorem}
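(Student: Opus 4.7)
The plan is to reduce to Theorem~\ref{thm:stressReduced} by a perturbation argument. That theorem requires $G_A$ to be infinitesimally rigid, which we are not assuming here; but we \emph{do} assume that $G_A$ and $G_B$ are universally rigid \emph{as graphs}, so every general-position configuration is universally rigid, and this is exactly what lets us perturb to a configuration where infinitesimal rigidity does hold.

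Concretely, using Corollary~\ref{cor:genposopen} together with the density of generic configurations, I would choose a sequence $(\mathbf{p}_A^{(n)},\mathbf{p}_B^{(n)}) \to (\mathbf{p}_A,\mathbf{p}_B)$ of generic general-position configurations with the shared vertices coinciding in the two graphs. At each $\mathbf{p}^{(n)}$ both frameworks are universally rigid by hypothesis, so by Gortler--Thurston (Theorem~\ref{thm:UR}) each carries a PSD stress matrix of nullity $d+1$. Moreover, universal rigidity implies local rigidity, and by Asimow--Roth local rigidity of a generic framework is equivalent to infinitesimal rigidity; hence $G_A$ is infinitesimally rigid at $\mathbf{p}_A^{(n)}$. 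Thus Theorem~\ref{thm:stressReduced} applies at $\mathbf{p}^{(n)}$, producing a PSD stress matrix $\widetilde{\Omega}_{re}^{(n)}$ of nullity $d+1$ for the edge-reduced attachment at the perturbed configuration.

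Normalizing each $\widetilde{\Omega}_{re}^{(n)}$ to unit Frobenius norm and extracting a convergent subsequence by compactness of the unit sphere in the cone of PSD matrices, I obtain a nonzero PSD limit $\widetilde{\Omega}^{*}$. Since the underlying graph is fixed along the sequence, the zero pattern required by property~(2) of Definition~\ref{dfn:defStressMat} is preserved, and properties (1), (3), (4) pass to the limit by continuity of $\mathbf{p}^{(n)}$; so $\widetilde{\Omega}^{*}$ is a stress matrix for the edge-reduced attachment at the original configuration $(\mathbf{p}_A,\mathbf{p}_B)$. By the argument in Lemma~\ref{lma:projker} its nullity is at least $d+1$.

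The main obstacle is showing this nullity is \emph{exactly} $d+1$, since nullity is only lower semicontinuous under limits. To control this, I would use the PSD matrix $\widetilde{\Omega}=\widetilde{\Omega}_A+\widetilde{\Omega}_B$ of Theorem~\ref{thm:OmegaSum}, which has nullity exactly $d+1$ at the original configuration with kernel spanned by the coordinate projections of $\mathbf{p}$ and the all-ones vector. By Lemma~\ref{lma:omegaepsilon} applied in a suitable way, one should be able to combine $\widetilde{\Omega}^{*}$ with a small correction that forces its nullity down to $d+1$ without destroying the stress-matrix structure for the edge-reduced attachment --- the delicate point being that $\widetilde{\Omega}$ itself has nonzero entries at the removed edges, so the correction must be engineered to cancel those while keeping the combined matrix PSD. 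Making this last step precise, or else refining the perturbation sequence so that the limit already has the correct nullity, is the main technical content.
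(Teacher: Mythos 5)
There is a genuine gap, and it is exactly the one you flag at the end: your limit matrix $\widetilde{\Omega}^{*}$ is only guaranteed to have nullity \emph{at least} $d+1$, and the repair you sketch cannot work as stated. Lemma~\ref{lma:omegaepsilon} would let you add a multiple of a PSD matrix whose kernel is exactly the $(d+1)$-dimensional space spanned by the coordinate projections and the all-ones vector, e.g.\ $\widetilde{\Omega}=\widetilde{\Omega}_A+\widetilde{\Omega}_B$ from Theorem~\ref{thm:OmegaSum}, but that matrix has nonzero entries precisely at the removed edges (they come from $\Omega_B$), so the corrected matrix violates property (2) of Definition~\ref{dfn:defStressMat} for the edge-reduced graph; and a PSD correction that both vanishes on the removed edges and has the right kernel is essentially the object the theorem asks you to construct in the first place. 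Nor is the alternative of ``refining the perturbation sequence'' obviously easier: rank can genuinely drop in the limit, and the normalized limit could even concentrate its support on a proper subframework. As written, your argument establishes only that the edge-reduced attachment admits a PSD stress matrix of nullity $\geq d+1$.

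Moreover, the whole perturbation detour is unnecessary, and this is the paper's route: the hypothesis is that $G_A$ and $G_B$ are universally rigid \emph{graphs}, i.e.\ every general-position framework of each is universally rigid, and this already forces infinitesimal rigidity at the given configuration itself, not just at nearby generic ones. Indeed, if $\mathbf{q}$ were a non-trivial infinitesimal flex of $G_A(\mathbf{p}_A)$, then for small $t>0$ both $G_A(\mathbf{p}_A+t\mathbf{q})$ and $G_A(\mathbf{p}_A-t\mathbf{q})$ are in general position by Corollary~\ref{cor:genposopen}, and the flex condition $(\mathbf{p}_i-\mathbf{p}_j,\mathbf{q}_i-\mathbf{q}_j)=0$ gives
\begin{displaymath}
\|(\mathbf{p}_{i}+t\mathbf{q}_{i})-(\mathbf{p}_{j}+t\mathbf{q}_{j})\|^{2}
=\|\mathbf{p}_i-\mathbf{p}_j\|^{2}+t^{2}\|\mathbf{q}_i-\mathbf{q}_j\|^{2}
=\|(\mathbf{p}_{i}-t\mathbf{q}_{i})-(\mathbf{p}_{j}-t\mathbf{q}_{j})\|^{2}
\end{displaymath}
for every edge, so the two frameworks are equivalent but, the flex being non-trivial, not congruent, contradicting universal rigidity of the graph. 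With infinitesimal rigidity of $G_A(\mathbf{p}_A)$ established at the original configuration, Theorem~\ref{thm:stressReduced} applies directly and yields the desired PSD stress matrix of nullity $d+1$, with no limiting argument and no appeal to genericity or Asimow--Roth.
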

\begin{proof}
To apply Theorem \ref{thm:stressReduced}, we only need to prove that any of the two frameworks is infinitesimally rigid.
Assume that a framework $G(\mathbf{p})$ in general position of such universally rigid graph $G$ is not infinitesimally rigid. Then there
exists a non-trivial infinitesimal flex $\mathbf{q}$. By Corollary \ref{cor:genposopen}, there is $t>0$ such that
the two frameworks $G(\mathbf{p}+t\mathbf{q})$ and $G(\mathbf{p}-t\mathbf{q})$ are in general position. From $(\mathbf{p}_i-\mathbf{p}_j,\mathbf{q}_i-\mathbf{q}_j)=0$ we have
\begin{displaymath}
\|(\mathbf{p}_{i}+t\mathbf{q}_{i})-(\mathbf{p}_{j}+t\mathbf{q}_{j})\|^{2}=
\|\mathbf{p}_i-\mathbf{p}_j\|^{2}+\|t\mathbf{q}_i-t\mathbf{q}_j\|^{2}=
\|(\mathbf{p}_{i}-t\mathbf{q}_{i})-(\mathbf{p}_{j}-t\mathbf{q}_{j})\|^{2},
\end{displaymath}
which holds for any edge $\{i,j\}$. Therefore, since the flex $t\mathbf{q}$ is non-trivial, $G(\mathbf{p}+t\mathbf{q})$ and $G(\mathbf{p}-t\mathbf{q})$
are equivalent but not
congruent, which contradicts universal rigidity of the graph $G$.
\end{proof}
\end{section}


\end{document}